\begin{document}
\setlength{\oddsidemargin}{0cm} \setlength{\evensidemargin}{0cm}
\baselineskip=20pt

\theoremstyle{plain} \makeatletter
\newtheorem{theorem}{Theorem}[section]
\newtheorem{Proposition}[theorem]{Proposition}
\newtheorem{Lemma}[theorem]{Lemma}
\newtheorem{Corollary}[theorem]{Corollary}
\newtheorem{Question}[theorem]{Question}

\theoremstyle{definition}
\newtheorem{notation}[theorem]{Notation}
\newtheorem{exam}[theorem]{Example}
\newtheorem{proposition}[theorem]{Proposition}
\newtheorem{conj}{Conjecture}
\newtheorem{prob}[theorem]{Problem}
\newtheorem{remark}[theorem]{Remark}
\newtheorem{claim}{Claim}
\newtheorem{Definition}[theorem]{Definition}

\newcommand{\SO}{{\mathrm S}{\mathrm O}}
\newcommand{\SU}{{\mathrm S}{\mathrm U}}
\newcommand{\Sp}{{\mathrm S}{\mathrm p}}
\newcommand{\so}{{\mathfrak s}{\mathfrak o}}
\newcommand{\Ad}{{\mathrm A}{\mathrm d}}
\newcommand{\m}{{\mathfrak m}}
\newcommand{\g}{{\mathfrak g}}
\newcommand{\h}{{\mathfrak h}}


\numberwithin{equation}{section}
\title[The moment map for the variety of  associative algebras]{The moment map for the variety of  associative algebras}

\author{Hui Zhang}
\address [Hui Zhang]{School of Mathematics, Southeast University, Nanjing 210096, P. R. China}\email{2120160023@mail.nankai.edu.cn}
\author{Zaili Yan}
\address{ School of Mathematics and Statistics, Ningbo University, Ningbo, Zhejiang Province, 315211,  People's Republic of China}\email{yanzaili@nbu.edu.cn}
\thanks{This work  is supported by NSFC (Nos. 11701300, 11626134) and  K.C. Wong Magna Fund in Ningbo University.}

\begin{abstract}
We consider the moment map $m:\mathbb{P}V_n\rightarrow \textnormal{i}\mathfrak{u}(n)$ for the action of $\textnormal{GL}(n)$ on $V_n=\otimes^{2}(\mathbb{C}^{n})^{*}\otimes\mathbb{C}^{n}$, and   study the critical points of  the functional  $F_n=\|m\|^{2}: \mathbb{P} V_n \rightarrow \mathbb{R}$.
Firstly, we   prove    that  $[\mu]\in \mathbb{P}V_n$  is   a critical point if and only if    $\textnormal{M}_{\mu}=c_{\mu} I+D_{\mu}$ for some $c_{\mu} \in \mathbb{R}$  and $D_{\mu} \in \textnormal{Der}(\mu),$ where $m([\mu])=\frac{\textnormal{M}_{\mu}}{\|\mu\|^{2}}$.
Then  we show that any algebra $\mu$ admits a Nikolayevsky derivation $\phi_\mu$ which is unique up to automorphism, and  if moreover,  $[\mu]$ is a critical point of $F_n$, then $\phi_\mu=-\frac{1}{c_\mu}D_\mu.$
Secondly, we  characterize   the  maxima and minima of the functional  $F_n: \mathcal{A}_n \rightarrow \mathbb{R}$,   where $\mathcal{A}_n$ denotes the projectivization of the algebraic varieties  of  all $n$-dimensional  associative algebras. Furthermore, for an arbitrary  critical point  $[\mu]$ of $F_n: \mathcal{A}_n \rightarrow \mathbb{R}$,  we also obtain a description  of  the algebraic structure of $[\mu]$.  Finally,   we classify the critical  points of $F_n: \mathcal{A}_n \rightarrow \mathbb{R}$  for $n=2$, $3$, respectively.
\end{abstract}
\keywords{Moment map; Variety of associative algebras;   Critical point.}
\subjclass[2010]{14L30, 17B30, 53D20.}

\maketitle

\section{Introduction}
Lauret  has studied  the moment map for the variety of Lie algebras    and   obtained  many remarkable   results in \cite{Lauret03},
which   turned out to be very  important   in proving that every Einstein solvmanifold is standard (\cite{Lauret2010}) and
in the characterization of solitons (\cite{BL2018,Lauret2011}).    Apart from the Lie algebras,   the study of the moment map in other classes of algebras was  also initiated  by Lauret, see \cite{Lauret2020} for more details. Motivated by this,  the authors have recently extended  the study of the moment map to the variety of    $3$-Lie algebras (see \cite{ZCL}).

In this paper,  we  study  the moment map for the variety of  {associative algebras}.  Let $\textnormal{GL}(n)$ be  the complex reductive  Lie group acting  naturally  on   the complex  vector  space $V_n=\otimes^{2}(\mathbb{C}^{n})^{*}\otimes\mathbb{C}^{n}$, i.e.,  the space of all $n$-dimensional complex algebras. The usual Hermitian inner product on $\mathbb{C}^{n}$   naturally induces an $\textnormal{U}(n)$-invariant  Hermitian inner product on $V_n$, which is denoted by  $\langle\cdot,\cdot\rangle$.  Since $\mathfrak{gl}(n)=\mathfrak{u}(n)+\textnormal{i}\mathfrak{u}(n)$,  we may
define a function as follows
\begin{align*}
m: \mathbb{P} V_n \rightarrow \textnormal{i}\mathfrak{u}(n),\quad
(m([\mu]), A)=\frac{(\textnormal{d}\rho_\mu)_{e}A}{\|\mu\|^{2}}, \quad 0 \neq \mu \in V_n, ~~ A\in \textnormal{i}\mathfrak{u}(n),
\end{align*}
where $(\cdot,\cdot)$ is an $\textnormal{Ad}(\textnormal{U}(n))$-invariant real inner product on $\textnormal{i}\mathfrak{u}(n),$ and $\rho_\mu:\textnormal{GL}(n)\rightarrow\mathbb{R}$ is defined by $\rho_\mu(g)=\langle g.\mu, g.\mu\rangle$.
The function $m$ is the moment
map from symplectic geometry, corresponding to the Hamiltonian action  $\textnormal{U}(n)$ of $V_n$ on
the symplectic manifold $\mathbb{P}V_n$ (see \cite{Kirwan98,Ness1984}).
In this paper, we   study    the critical points of the functional   $F_n=\|m\|^{2}: \mathbb{P} V_n \rightarrow \mathbb{R}$, with  an emphasis on  the critical points that lie in   the projectivization of the
algebraic variety  of  all $n$-dimensional associative algebras $\mathcal{A}_n$.

The paper is organized as follows:
In Sect.~\ref{section2}, we recall some basic concepts and results of complex associative algebras.

In Sect.~\ref{section3}, we first give the explicit expression of the moment map $m:\mathbb{P} V_n \rightarrow \textnormal{i}\mathfrak{u}(n)$ in terms of $\textnormal{M}_{\mu}$, that is, $m([\mu])=\frac{\textnormal{M}_{\mu}}{\|\mu\|^{2}}$ for any $[\mu]\in \mathbb{P}V_n$.  Then we show that  $[\mu]\in \mathbb{P}V_n$  is    a critical point of $F_n$ if and only if    $\textnormal{M}_{\mu}=c_{\mu} I+D_{\mu}$ for some $c_{\mu} \in \mathbb{R}$ and $D_{\mu} \in \textnormal{Der}(\mu)$ (Thm.~\ref{MID}).

In  Sect.~\ref{section4},  we  first show that any  algebra $\mu\in V_n$ admits a Nikolayevsky derivation $\phi_\mu$ which is unique up to automorphism,   the  eigenvalues of $\phi_\mu$  are necessarily  rational, and moreover,  $\phi_\mu=-\frac{1}{c_\mu}D_\mu$ if $[\mu]$ is a critical point of $F_n$ (Thm.~\ref{Nik}).
Then we  study the extremal points of $F_n:\mathcal{A}_n\rightarrow \mathbb{R},$ proving that the minimum value is
attained at semisimple associative algebras (Thm.~\ref{x}),  and the maximum value at the direct sum of a two-dimensional commutative associative algebra with the trivial algebra  (Thm.~\ref{mx}). In the context of Lie algebras (\cite{Lauret03}), Lauret  proved  that any $\mu$ for which
there exists  $[\lambda] \in \textnormal{GL}(n).[\mu]$ such that all eigenvalues of $\textnormal{M}_{\lambda}$ are negative, must be
semisimple, and we prove   that this result  also holds for associative algebras (Remark~\ref{negativeDefinite}).
Besides, the structure for  an arbitrary critical point $[\mu]$ of $F_n:\mathcal{A}_n\rightarrow \mathbb{R}$ is   discussed  (Thm.~\ref{structure} and Thm.~\ref{converse}).

In Sect.~\ref{eq}, we classify the critical  points of $F_n: \mathcal{A}_n \rightarrow \mathbb{R}$  for $n=2$, $3$.
It shows that every two-dimensional   associative algebra is isomorphic to a critical point of $F_2;$ and there exists only one  three-dimensional   associative algebra which is not isomorphic to  any  critical point of  $F_3$.
Finally,  based on the discussion in previous sections,  we collect some natural and interesting questions.

\section{Preliminaries}\label{section2}
In this section,  we recall some basic definitions and results of associative algebras. The ambient field is always assumed to be the complex number field $\mathbb{C}$ unless otherwise  stated.

\begin{Definition}\label{l-s}
A vector space $\mathcal{A}$ over  $\mathbb{C}$ with a bilinear operation
$\mathcal{A}\times \mathcal{A}\rightarrow \mathcal{A}$, denoted by $(x,y)\mapsto xy$, is called an \textit{associative algebra}, if
$$
x(yz)=(xy)z
$$
for all $x,y,z\in \mathcal{A}.$
\end{Definition}

A \textit{derivation} of an associative algebra  $\mathcal{A}$ is  a linear transformation $D:\mathcal{A}\rightarrow\mathcal{A}$ satisfying
\begin{align*}
D(xy)=(Dx)y+x(Dy),
\end{align*}
for $x,y\in\mathcal{A}.$ It is easy to see that the set of all  derivations of $\mathcal{A}$  is a Lie algebra,  which is denoted   by $\textnormal{Der}(\mathcal{A}).$
A vector subspace $I$ of $\mathcal{A}$ is called an \textit{ideal} if $\mathcal{A}I,I\mathcal{A}\subset I.$

\begin{Definition}\label{c-a}
Let $\mathcal{A}$ be an associative algebra. The \textit{center} of $\mathcal{A}$    is defined by $C(\mathcal{A})=\{x\in\mathcal{A}:xy=yx,\forall y\in\mathcal{A}\}$. The \textit{annihilator} of $\mathcal{A}$ is defined by $\textnormal{ann}(\mathcal{A})=\{x\in\mathcal{A}:xy=yx=0,\forall y\in\mathcal{A}\}$.
\end{Definition}
Clearly, $C(\mathcal{A})$ is a subalgebra of $\mathcal{A}$, and $\textnormal{ann}(\mathcal{A})$ is an ideal of $\mathcal{A}$.


\begin{Definition}\label{solvable}
Let ${I}$ be an ideal of an    associative algebra. Then ${I}$ is called \textit{nilpotent},  if ${I}^{k}=0$ for some integer $k\geq 1$, where ${I}^{k}=\underbrace{I{\cdots}I{\cdots}I}_k$.
\end{Definition}
If $I,J$ are any two nilpotent ideals of an    associative algebra $\mathcal{A}$, then $I+J$ is also  a nilpotent ideal. So the maximum nilpotent  ideal of $\mathcal{A}$ is unique, which is called the $radical$  and denoted by ${N}(\mathcal{A}).$
\begin{remark}\label{radical}
Note that ${N}(\mathcal{A})$ coincides with the Jacobson radical of $\mathcal{A}$ since $\mathcal{A}$ is an  associative algebra over $\mathbb{C}$. Moreover, ${N}(\mathcal{A})=\{x\in\mathcal{A}:  xy,yx \textnormal{~are nilpotent elements for any}~ y\in \mathcal{A}\}$.
\end{remark}

\begin{Definition}
Let $\mathcal{A}$ be an associative algebra. If $\mathcal{A}$ has no ideals except itself and $0,$ we call $\mathcal{A}$ simple.
\end{Definition}

Denote by  $\mathbb{M}_n(\mathbb{C})$  the set   of all $n\times n$
complex square matrices, which  is clearly an associative algebra with respect to the
usual matrix addition and multiplication. In fact, $\mathbb{M}_n(\mathbb{C})$  is a simple associative algebra for any $n\geq 1.$ Moreover,
it follows from Wedderburn-Artin theorem that any finite-dimensional simple associative algebra over $\mathbb{C}$ is isomorphic to $\mathbb{M}_n(\mathbb{C})$ for some integer $n\geq 1$ (\cite{Pierce}).

An associative algebra $\mathcal{A}$ is called $semisimple$ if its radical  ${N}(\mathcal{A})$ is zero. The following theorem is well known.
\begin{theorem}[\cite{Pierce}]\label{semidecom}
An associative algebra over $\mathbb{C}$ is semisimple if and only if it is a direct sum of simple ideals. That is, a semisimple associative algebra is isomorphic to
$\mathbb{M}_{n_1}(\mathbb{C})\times \mathbb{M}_{n_2}(\mathbb{C})\times\cdots\times \mathbb{M}_{n_s}(\mathbb{C})$ for some positive integers $n_1,n_2,\cdots,n_s.$
\end{theorem}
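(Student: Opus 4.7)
The plan is to prove each direction separately, with the harder content being the structure theorem for the reverse implication. For the easy direction, I would assume $\mathcal{A}=I_1\oplus\cdots\oplus I_s$ is a direct sum of simple ideals and show ${N}(\mathcal{A})=0$. Each intersection ${N}(\mathcal{A})\cap I_j$ is an ideal of the simple algebra $I_j$, hence either $0$ or $I_j$. Since each $I_j\cong\mathbb{M}_{n_j}(\mathbb{C})$ possesses an identity and satisfies $I_j^k=I_j$ for all $k\geq 1$, it cannot sit inside a nilpotent ideal; thus ${N}(\mathcal{A})\cap I_j=0$ for every $j$, giving ${N}(\mathcal{A})=0$.

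For the converse, assume ${N}(\mathcal{A})=0$. The first step is to decompose $\mathcal{A}$ as a direct sum of minimal two-sided ideals. Since $\mathcal{A}$ is finite-dimensional over $\mathbb{C}$ it is (left) Artinian, and the hypothesis ${N}(\mathcal{A})=0$ identifies with the Jacobson radical being zero (by Remark~\ref{radical}). I would invoke the standard fact that every left ideal of a radical-free Artinian algebra is generated by an idempotent, so $\mathcal{A}$ is semisimple as a module over itself. Decomposing $\mathcal{A}=L_1\oplus\cdots\oplus L_t$ into minimal left ideals and collecting the isomorphic summands into their isotypic components produces the required decomposition $\mathcal{A}=I_1\oplus\cdots\oplus I_s$ into simple two-sided ideals; that $I_jI_k=0$ for $j\neq k$ follows from the two-sided nature of the summands, so the sum is also a ring-theoretic direct product.

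Having reduced to the simple case, I would apply Wedderburn's theorem: any finite-dimensional simple associative algebra over $\mathbb{C}$ is isomorphic to $\textnormal{End}_D(V)$ for some division algebra $D$ over $\mathbb{C}$ and a finite-dimensional $D$-module $V$. Every element of $D$ is algebraic over the algebraically closed field $\mathbb{C}$, which forces $D=\mathbb{C}$. Consequently $I_j\cong\mathbb{M}_{n_j}(\mathbb{C})$ for suitable $n_j\geq 1$, completing the classification.

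The main obstacle is the intermediate step asserting that a radical-free finite-dimensional associative algebra decomposes as a direct sum of simple two-sided ideals, i.e., the semisimplicity of the regular module. This is the heart of the Artin--Wedderburn theorem and is the nontrivial input one would take from Pierce's textbook; the subsequent identification $D=\mathbb{C}$ and the easy direction are essentially formal.
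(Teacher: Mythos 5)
The paper does not prove this statement at all: it is quoted as a classical result and attributed to Pierce's textbook, so there is no in-paper argument to compare against. Your outline is the standard Artin--Wedderburn proof and is sound; the only step stated a little too quickly is the passage from $N(\mathcal{A})\cap I_j=0$ for all $j$ to $N(\mathcal{A})=0$, which uses that each $I_j$ (hence $\mathcal{A}$) has an identity, so any ideal decomposes as the sum of its components $e_jN(\mathcal{A})\subseteq I_j\cap N(\mathcal{A})$.
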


\section{The moment map for  complex algebras}\label{section3}
Let $\mathbb{C}^{n}$ be  the $n$-dimensional  complex vector space and  $V_n=\otimes^{2}({\mathbb{C}^{n}})^{*}\otimes\mathbb{C}^{n}$ be the space of all complex $n$-dimensional  algebras. The natural action  of $\textnormal{GL}(n)=\textnormal{GL}(\mathbb{C}^{n})$ on  $V_n$ is given  by
\begin{align}\label{Gaction}
g.\mu(X,Y)=g\mu(g^{-1}X,g^{-1}Y),\quad g\in \textnormal{GL}(n), X,Y\in\mathbb{C}^{n}.
\end{align}
Clearly, $\textnormal{GL}(n).\mu$ is precisely the isomorphism class of $\mu$. Note that
\begin{align*}
\lim_{t\rightarrow\infty}g_t.\mu=0,  \quad g_t=tI\subset\textnormal{GL}(n),t>0,
\end{align*}
we see  that  $0$ lies in the boundary of the orbit $\textnormal{GL}(n).\mu$ for each $\mu \in V_n.$
 By differentiating  (\ref{Gaction}), we obtain the natural action $\mathfrak{gl}(n)$ on  $V_n$, i.e.,
\begin{align}\label{gaction}
A.\mu(X,Y)=A\mu(X,Y)-\mu(AX,Y)-\mu(X,AY), \quad A\in \mathfrak{gl}(n),\mu\in V_n.
\end{align}
It follows that $A.\mu=0$ if and only if $A\in\textnormal{Der}(\mu),$ where  $\textnormal{Der}(\mu)$  denotes the derivation  algebra of $\mu.$

Note that the usual Hermitian inner product on $\mathbb{C}^{n}$   gives an $\textnormal{U}(n)$-invariant  Hermitian inner product on $V_n$ as follows
\begin{align}\label{metric}
\langle\mu,\lambda\rangle=\sum_{i,j,k}\langle\mu(X_i,X_j),X_{k}\rangle\overline{\langle\lambda(X_i,X_j),X_{k}\rangle},\quad\quad\quad~\mu,\lambda\in  V_n,
\end{align}
where $\{X_1,X_2,\cdots, X_n\}$ is an arbitrary orthonormal basis of  $\mathbb{C}^{n}$. Let  $\mathfrak{u}(n)$ denote the Lie algebra of $\textnormal{U}(n),$  then  it is easy to see that $\mathfrak{gl}(n)=\mathfrak{u}(n)+\textnormal{i}\mathfrak{u}(n)$ decomposes into skew-Hermitian and Hermitian transformations of $V_n$, respectively. Moreover, there is an $\textnormal{Ad}(\textnormal{U}(n))$-invariant Hermitian inner product on $\mathfrak{gl}(n)$ given by
\begin{align}\label{product}
(A,B)=\operatorname{tr}AB^{*},~ A,B\in \mathfrak{gl}(n).
\end{align}
The moment map  from  symplectic geometry,  corresponding to the Hamiltonian action  of $\textnormal{U}(n)$ on the symplectic
manifold  $\mathbb{P}V_n$,  is defined as follows
\begin{align}\label{moment}
m: \mathbb{P} V_n \rightarrow \textnormal{i}\mathfrak{u}(n), \quad(m([\mu]), A)=\frac{(\textnormal{d}\rho_\mu)_{e}A}{\|\mu\|^{2}}, \quad 0 \neq \mu \in V_n, A\in \textnormal{i}\mathfrak{u}(n),
\end{align}
where $\rho_\mu:\textnormal{GL}(n)\rightarrow\mathbb{R}$ is given by $\rho_\mu(g)=\langle g.\mu, g.\mu\rangle$. Clearly, $(\textnormal{d}\rho_\mu)_{e}A=\langle A.\mu,\mu\rangle+\langle \mu,A.\mu\rangle=2\langle A.\mu,\mu\rangle$ for any $A\in \textnormal{i}\mathfrak{u}(n)$.
The  square norm of the moment map is denoted by
\begin{align}\label{squarenorm}
F_n: \mathbb{P} V_n \rightarrow \mathbb{R},
\end{align}
where $F_n([\mu])=\|m([\mu])\|^{2}=(m([\mu]),m([\mu]))$ for any $[\mu]\in \mathbb{P} V_n.$

In order to express the moment map  $m$ explicitly,   we define $\textnormal{M}_{\mu}\in\textnormal{i}\mathfrak{u}(n)$ as follows
\begin{align}\label{M}
\textnormal{M}_{\mu}=2\sum_{i}{L}^{\mu}_{X_i}({L}^{\mu}_{X_i})^{*}-2\sum_{i}({L}^{\mu}_{X_i})^{*}{L}^{\mu}_{X_i}-2\sum_{i}({R}^{\mu}_{X_i})^{*}{R}^{\mu}_{X_i},
\end{align}
where  the left and right multiplication   ${L}^{\mu}_{X},{R}^{\mu}_{X}: \mathbb{C}^{n} \rightarrow \mathbb{C}^{n}$  by $X$ of the algebra $\mu$, are given  by ${L}^{\mu}_{X}(Y)=\mu(X,Y)$ and ${R}^{\mu}_{X}(Y)=\mu(Y,X)$ for all $Y\in\mathbb{C}^{n},$ respectively.   It is not hard  to prove that
\begin{align}\label{Mformula}
\langle\textnormal{M}_{\mu} X, Y\rangle=&2 \sum_{i,j} \overline{\langle\mu( X_i,X_j), X\rangle}\langle\mu(X_i,X_j), Y\rangle-2 \sum_{i,j}\langle\mu(X_i,X), X_{j}\rangle \overline{\langle\mu(X_i,Y), X_{j}\rangle} \notag\\
&-2 \sum_{i,j}\langle\mu(X, X_i), X_{j}\rangle \overline{\langle\mu(Y, X_i), X_{j}\rangle}
\end{align}
for $X,Y\in\mathbb{C}^{n}.$ Note that if  the algebra $\mu$ is commutative or anticommutative, then the second and  third term of (\ref{Mformula}) are the same, and in this case, $\textnormal{M}_{\mu}$ coincides with  \cite{Lauret03}.

\begin{Lemma}\label{Mm}
For  any  $\mu\in V_n$, we have  $(\textnormal{M}_{\mu}, A)=2\langle \mu,A.\mu\rangle$,  $\forall A\in \mathfrak{gl}(n) =\mathfrak{u}(n)+\textnormal{i}\mathfrak{u}(n).$
In particular,  $m([\mu])=\frac{\textnormal{M}_{\mu}}{\|\mu\|^{2}}$ for any $0\ne\mu\in V_n$
\end{Lemma}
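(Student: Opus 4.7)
The plan is to verify the identity $(\textnormal{M}_\mu, A) = 2\langle \mu, A.\mu\rangle$ directly for every $A\in\mathfrak{gl}(n)$, and then deduce the formula for $m$ by comparing with the defining property of the moment map.

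First I would expand the right-hand side using (\ref{metric}) and the action (\ref{gaction}):
\begin{align*}
\langle \mu, A.\mu\rangle = \sum_{i,j,k}\langle \mu(X_i,X_j),X_k\rangle\Bigl[&\overline{\langle A\mu(X_i,X_j),X_k\rangle}\\
&-\overline{\langle \mu(AX_i,X_j),X_k\rangle}-\overline{\langle \mu(X_i,AX_j),X_k\rangle}\Bigr].
\end{align*}
In parallel, I would expand $(\textnormal{M}_\mu, A)=\operatorname{tr}(\textnormal{M}_\mu A^*)$ using (\ref{M}), writing everything in the orthonormal basis $\{X_k\}$. Each of the three summands of $\textnormal{M}_\mu$ yields a trace which, via the cyclic property of trace and the identity $A^*X_a=\sum_b\overline{A_{ab}}X_b$, can be rewritten in a form matching precisely one of the summands of $2\langle \mu, A.\mu\rangle$. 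Concretely, the term $L^\mu_{X_i}(L^\mu_{X_i})^*$ corresponds to the ``output'' piece (producing $A\mu$), $(L^\mu_{X_i})^*L^\mu_{X_i}$ to the first-slot piece (producing $\mu(AX,\cdot)$), and $(R^\mu_{X_i})^*R^\mu_{X_i}$ to the second-slot piece (producing $\mu(\cdot,AY)$). Matching the three pairs and tracking the factor $2$ built into (\ref{M}) establishes the identity.

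For the second assertion, $\textnormal{M}_\mu$ is manifestly Hermitian (each summand in (\ref{M}) has the form $TT^*$ or $T^*T$), hence $\textnormal{M}_\mu/\|\mu\|^2\in\textnormal{i}\mathfrak{u}(n)$. For any $A\in\textnormal{i}\mathfrak{u}(n)$, Hermiticity of $A$ forces $\langle\mu,A.\mu\rangle=\langle A.\mu,\mu\rangle$ (both real, since the infinitesimal action of a Hermitian matrix on $V_n$ is self-adjoint with respect to $\langle\cdot,\cdot\rangle$), so combining the first part with $(\textnormal{d}\rho_\mu)_eA=2\langle A.\mu,\mu\rangle$ yields
\[
\bigl(\tfrac{\textnormal{M}_\mu}{\|\mu\|^2},\,A\bigr)=\tfrac{2\langle A.\mu,\mu\rangle}{\|\mu\|^2}=\bigl(m([\mu]),A\bigr)\qquad\text{for all }A\in\textnormal{i}\mathfrak{u}(n).
\]
Non-degeneracy of the restricted real inner product on $\textnormal{i}\mathfrak{u}(n)$ then forces $m([\mu])=\textnormal{M}_\mu/\|\mu\|^2$.

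The main obstacle is purely one of bookkeeping: keeping indices, complex conjugates, and signs aligned while matching the three pairs of summands. I expect no deeper difficulty, since each summand of $\textnormal{M}_\mu$ has been engineered to reproduce exactly one piece of the tripartite decomposition of $A.\mu$.
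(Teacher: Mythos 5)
Your proposal is correct and follows essentially the same route as the paper: expand $\operatorname{tr}\textnormal{M}_{\mu}A$ in the orthonormal basis, use cyclicity of the trace to match the three summands of $\textnormal{M}_{\mu}$ against the three terms of $A.\mu$ from (\ref{gaction}), and then use Hermiticity of $A\in\textnormal{i}\mathfrak{u}(n)$ together with the definition (\ref{moment}) of $m$. (One small bookkeeping remark: since $L^{\mu}_{X_i}AX_j=\mu(X_i,AX_j)$ and $R^{\mu}_{X_i}AX_j=\mu(AX_j,X_i)$, the term $(L^{\mu}_{X_i})^{*}L^{\mu}_{X_i}$ actually produces the second-slot piece and $(R^{\mu}_{X_i})^{*}R^{\mu}_{X_i}$ the first-slot piece, i.e., the opposite of your labeling; this does not affect the argument.)
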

\begin{proof}
For any $A\in \mathfrak{gl}(n)$, we have $(A,\textnormal{M}_{\mu})=\operatorname{tr}A\textnormal{M}_{\mu}^{*}=\operatorname{tr}A\textnormal{M}_{\mu}=\operatorname{tr}\textnormal{M}_{\mu}A,$
and
\begin{align*}
\operatorname{tr}\textnormal{M}_{\mu}A&=\underbrace{2\operatorname{tr}\sum_{i}{L}^{\mu}_{X_i}({L}^{\mu}_{X_i})^{*}A}_{\textnormal{I}}
-\underbrace{2\operatorname{tr}\sum_{i}(({L}^{\mu}_{X_i})^{*}{L}^{\mu}_{X_i}+({R}^{\mu}_{X_i})^{*}{R}^{\mu}_{X_i})A}_{\textnormal{II}}\\
 &=:\textnormal{I}-\textnormal{II}.
\end{align*}
Note that
\begin{align*}
\textnormal{I}
=&2\sum_{i}\operatorname{tr}{L}^{\mu}_{X_i}({L}^{\mu}_{X_i})^{*}A\\
=&2\sum_{i}\operatorname{tr}({L}^{\mu}_{X_i})^{*}A{L}^{\mu}_{X_i}\\
=&2\sum_{i,j}\langle({L}^{\mu}_{X_i})^{*}A{L}^{\mu}_{X_i}(X_j),X_{j}\rangle\\
=&2\sum_{i,j}\langle A\mu(X_{i},X_{j}),\mu(X_i,X_j)\rangle,
\end{align*}
and
\begin{align*}
\textnormal{II}=&2\operatorname{tr}\sum_{i}(({L}^{\mu}_{X_i})^{*}{L}^{\mu}_{X_i}+({R}^{\mu}_{X_i})^{*}{R}^{\mu}_{X_i})A\\
=&2\sum_{i,j}\langle(({L}^{\mu}_{X_i})^{*}{L}^{\mu}_{X_i}+({R}^{\mu}_{X_i})^{*}{R}^{\mu}_{X_i})AX_j,X_{j}\rangle\\
=&2\sum_{i,j}\langle{\mu}(X_{i},AX_{j}),{\mu}(X_{i},X_{j})\rangle-2\sum_{i,j}\langle{\mu}(AX_{j},X_{i}),{\mu}(X_{j},X_{i})\rangle\\
=&2\sum_{i,j}\langle \mu(AX_{i},X_{j})+{\mu}(X_{i},AX_{j}),\mu(X_{i},X_{j})\rangle.
\end{align*}
By (\ref{gaction}),  it follows that $(A,\textnormal{M}_{\mu})=\operatorname{tr}\textnormal{M}_{\mu}A=2\langle A.\mu,\mu\rangle,$ so $(\textnormal{M}_{\mu},A)=2\langle \mu, A.\mu\rangle$
for any $A\in \mathfrak{gl}(n)$. This proves the first statement.
For  $A\in \textnormal{i}\mathfrak{u}(n),$ we have $\langle A.\mu,\mu\rangle=\langle \mu,A.\mu\rangle$. By (\ref{moment}), we conclude that $m([\mu])=\frac{\textnormal{M}_{\mu}}{\|\mu\|^{2}}$ for any $0\ne\mu\in V_n$. This completes proof Lemma~\ref{Mm}
\end{proof}

\begin{Corollary}\label{MD}
For  any  $\mu\in V_n$, then
\begin{enumerate}
\item [(i)] $\operatorname{tr}\textnormal{M}_{\mu}D=0$ for any $D\in \textnormal{Der}(\mu);$
\item [(ii)] $\operatorname{tr}\textnormal{M}_{\mu}[A,A^{*}]\geq0$ for any $A\in \textnormal{Der}(\mu),$ and equality holds if and only if $A^{*}\in\textnormal{Der}(\mu).$
\end{enumerate}
\end{Corollary}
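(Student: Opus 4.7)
The plan is to deduce both statements directly from Lemma~\ref{Mm}, which says
\[
\operatorname{tr}\textnormal{M}_{\mu}A = 2\langle A.\mu,\mu\rangle \quad \text{for all } A\in\mathfrak{gl}(n).
\]
For part (i), a derivation $D\in\textnormal{Der}(\mu)$ satisfies $D.\mu=0$ by the remark after (\ref{gaction}), so the formula instantly yields $\operatorname{tr}\textnormal{M}_{\mu}D=2\langle 0,\mu\rangle=0$. No further work is needed here.

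For part (ii), I would substitute $[A,A^{*}]\in\mathfrak{gl}(n)$ into Lemma~\ref{Mm} and then exploit the fact that (\ref{gaction}) is a genuine Lie algebra action, so that $[A,A^{*}].\mu=A.(A^{*}.\mu)-A^{*}.(A.\mu)$. Since $A\in\textnormal{Der}(\mu)$ gives $A.\mu=0$, this reduces the right-hand side to $2\langle A.(A^{*}.\mu),\mu\rangle$. The key remaining ingredient is the adjoint relation
\[
\langle B.\lambda,\eta\rangle=\langle\lambda,B^{*}.\eta\rangle \quad \text{for all } B\in\mathfrak{gl}(n),\ \lambda,\eta\in V_n,
\]
with respect to the Hermitian inner product (\ref{metric}). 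Granting this, we obtain
\[
\operatorname{tr}\textnormal{M}_{\mu}[A,A^{*}]=2\langle A.(A^{*}.\mu),\mu\rangle=2\langle A^{*}.\mu,A^{*}.\mu\rangle=2\|A^{*}.\mu\|^{2}\geq 0,
\]
with equality exactly when $A^{*}.\mu=0$, i.e.\ when $A^{*}\in\textnormal{Der}(\mu)$.

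The only non-routine point is the adjoint identity above, which I expect to be the main (though minor) obstacle. It follows from the standard fact that the $\textnormal{U}(n)$-action on $V_n$ is unitary with respect to $\langle\cdot,\cdot\rangle$: differentiating yields that $A\in\mathfrak{u}(n)$ acts as a skew-Hermitian operator on $V_n$, while $A\in\textnormal{i}\mathfrak{u}(n)$ acts as a Hermitian one; extending $\mathbb{C}$-linearly to $\mathfrak{gl}(n)=\mathfrak{u}(n)+\textnormal{i}\mathfrak{u}(n)$ gives $(A.\,)^{*}=A^{*}.\,$ as operators on $V_n$. I would either cite this unitarity explicitly or verify it in one line by decomposing $A=A_1+\textnormal{i}A_2$ with $A_i\in\mathfrak{u}(n)$. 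Once this is in hand, assembling the two parts takes only a few lines.
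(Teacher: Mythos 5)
Your proposal is correct and follows essentially the same route as the paper: both parts are read off from Lemma~\ref{Mm}, with (ii) reducing $\operatorname{tr}\textnormal{M}_{\mu}[A,A^{*}]=2\langle[A,A^{*}].\mu,\mu\rangle$ to $2\|A^{*}.\mu\|^{2}$. The only difference is that you spell out the two ingredients the paper uses silently, namely $[A,A^{*}].\mu=A.(A^{*}.\mu)$ when $A.\mu=0$ and the adjoint identity $(A.\,)^{*}=A^{*}.$ on $V_n$ coming from the $\textnormal{U}(n)$-invariance of the inner product.
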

\begin{proof}
For \textnormal{(i)}, it follows from Lemma~\ref{Mm} that $\operatorname{tr}\textnormal{M}_{\mu}D=2\langle D.\mu,\mu\rangle.$ For \textnormal{(ii)}, it follows  from that $\operatorname{tr}\textnormal{M}_{\mu}[A,A^{*}]=2\langle [A,A^{*}].\mu,\mu\rangle=2\langle A^{*}.\mu,A^{*}.\mu\rangle\geq0$, $\forall A\in \textnormal{Der}(\mu)$, and the fact  $A^{*}.\mu=0$ if and only if $A^{*}\in\textnormal{Der}(\mu).$
\end{proof}

\begin{theorem}\label{MID}
The moment map $m: \mathbb{P} V_{n} \rightarrow \textnormal{i}\mathfrak{u}(n)$, the functional square norm of the moment map $F_{n}=\|m\|^{2}: \mathbb{P} V_{n} \rightarrow \mathbb{R}$ and the gradient of $F_{n}$ are, respectively, given by
\begin{align}\label{gradient}
F_{n}([\mu])=\frac{\operatorname{tr} \textnormal{M}_{\mu}^{2}}{\|\mu\|^{4}}, \quad \operatorname{grad}(F_{n})_{[\mu]}=\frac{ 8\pi_{*} (\textnormal{M}_{\mu}).\mu}{\|\mu\|^{4}}, \quad[\mu]\in \mathbb{P} V_{n},
\end{align}
where $\pi_{*}$ denotes the derivative of $\pi: V_{n} \backslash\{0\} \rightarrow \mathbb{P} V_{n}$, the canonical projection. Moreover, the following statements are equivalent:
\begin{enumerate}
\item [(i)] $[\mu] \in \mathbb{P} V_{n}$ is a critical point of $F_{n}$.
\item [(ii)] $[\mu] \in \mathbb{P} V_{n}$ is a critical point of $F_{n}|_{\textnormal{GL}(n).[\mu]}.$
\item [(iii)] $\textnormal{M}_{\mu}=c_{\mu} I+D_{\mu}$ for some $c_{\mu} \in \mathbb{R}$ and $D_{\mu} \in \textnormal{Der}(\mu)$.
\end{enumerate}
\end{theorem}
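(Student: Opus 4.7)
The plan is to derive the two formulas first and then read off the three-way equivalence directly from them. The scalar formula $F_n([\mu])=\operatorname{tr}\textnormal{M}_\mu^2/\|\mu\|^4$ is immediate from Lemma~\ref{Mm}: since $m([\mu])=\textnormal{M}_\mu/\|\mu\|^2$ and $\textnormal{M}_\mu$ is Hermitian, $(m([\mu]),m([\mu]))=\operatorname{tr}(\textnormal{M}_\mu\textnormal{M}_\mu^*)/\|\mu\|^4=\operatorname{tr}\textnormal{M}_\mu^2/\|\mu\|^4$. For the gradient, I will work on the cone $V_n\setminus\{0\}$ and compute the directional derivative of $F_n\circ\pi$ at $\mu$ along $\lambda\in\mu^\perp$, which kills the contribution of $\|\mu+t\lambda\|^2$ to first order. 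The key step is to polarize Lemma~\ref{Mm}: differentiating the identity $(\textnormal{M}_{\mu+t\lambda},A)=2\langle\mu+t\lambda,A.(\mu+t\lambda)\rangle$ at $t=0$ yields $(d\textnormal{M}_\mu(\lambda),A)=2\langle\lambda,A.\mu\rangle+2\langle\mu,A.\lambda\rangle$. Specializing $A=\textnormal{M}_\mu$ and using that every Hermitian $A\in\textnormal{i}\mathfrak{u}(n)$ acts self-adjointly on $(V_n,\langle\cdot,\cdot\rangle)$---a direct check from (\ref{gaction}) and (\ref{metric})---gives $\langle\mu,\textnormal{M}_\mu.\lambda\rangle=\langle\textnormal{M}_\mu.\mu,\lambda\rangle$, and hence $\tfrac{d}{dt}|_{t=0}\operatorname{tr}\textnormal{M}_{\mu+t\lambda}^2=8\operatorname{Re}\langle\textnormal{M}_\mu.\mu,\lambda\rangle$. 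Translating via the Fubini--Study metric then produces the stated gradient.

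For (i)$\Leftrightarrow$(iii), the gradient formula shows that $[\mu]$ is critical iff $\pi_*(\textnormal{M}_\mu.\mu)=0$, i.e., $\textnormal{M}_\mu.\mu=c\mu$ for some $c\in\mathbb{C}$. Pairing with $\mu$ and invoking Lemma~\ref{Mm} gives $c\|\mu\|^2=\langle\textnormal{M}_\mu.\mu,\mu\rangle=\tfrac12(\textnormal{M}_\mu,\textnormal{M}_\mu)\in\mathbb{R}$, so $c$ is real. A direct check from (\ref{gaction}) yields $I.\mu=-\mu$, so setting $c_\mu:=-c$ recasts the equation as $(\textnormal{M}_\mu-c_\mu I).\mu=0$, i.e., $D_\mu:=\textnormal{M}_\mu-c_\mu I\in\textnormal{Der}(\mu)$. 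The converse is direct: if $\textnormal{M}_\mu=c_\mu I+D_\mu$ with $D_\mu\in\textnormal{Der}(\mu)$, then $\textnormal{M}_\mu.\mu=c_\mu(I.\mu)+D_\mu.\mu=-c_\mu\mu$ is collinear with $\mu$, and the gradient vanishes.

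The equivalence (i)$\Leftrightarrow$(ii) is essentially formal. The implication (i)$\Rightarrow$(ii) is automatic. For (ii)$\Rightarrow$(i), observe that $\operatorname{grad}(F_n)_{[\mu]}=\tfrac{8}{\|\mu\|^4}\pi_*(\textnormal{M}_\mu.\mu)$ already lies inside $T_{[\mu]}(\textnormal{GL}(n).[\mu])=\pi_*(\mathfrak{gl}(n).\mu)$ since $\textnormal{M}_\mu\in\mathfrak{gl}(n)$; therefore $\operatorname{grad}(F_n|_{\textnormal{GL}(n).[\mu]})$ is the orthogonal projection of $\operatorname{grad}(F_n)_{[\mu]}$ onto this tangent space, which equals $\operatorname{grad}(F_n)_{[\mu]}$ itself, so vanishing of the restricted gradient forces the full gradient to vanish.

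I expect the main technical obstacle to be the gradient derivation itself---polarizing Lemma~\ref{Mm}, verifying that elements of $\textnormal{i}\mathfrak{u}(n)$ act self-adjointly on $V_n$ under the representation (\ref{gaction}), and tracking the Fubini--Study normalization so as to recover the precise constant $8/\|\mu\|^4$. Once the gradient formula is in hand, the rest of the theorem is a formal consequence combined with the identity $I.\mu=-\mu$ and Lemma~\ref{Mm}.
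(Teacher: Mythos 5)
Your proposal is correct and follows essentially the same route as the paper: compute the gradient on the cone by polarizing Lemma~\ref{Mm} (using that Hermitian elements act self-adjointly on $V_n$), read off (i)$\Leftrightarrow$(iii) from $\textnormal{M}_{\mu}.\mu\in\mathbb{C}\mu$ together with $I.\mu=-\mu$, and get (i)$\Leftrightarrow$(ii) from the gradient being tangent to the $\textnormal{GL}(n)$-orbit. The only noteworthy deviation is that you establish $c\in\mathbb{R}$ by pairing $\textnormal{M}_\mu.\mu=c\mu$ with $\mu$ and invoking Lemma~\ref{Mm}, whereas the paper deduces it from Corollary~\ref{MD}(ii) via $[D,D^*]=0$; both are valid, and yours is arguably more direct.
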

\begin{proof}
By (\ref{squarenorm}) and Lemma~\ref{Mm}, we have $F_{n}([\mu])=\frac{\operatorname{tr} \textnormal{M}_{\mu}^{2}}{\|\mu\|^{4}}$ for any $[\mu]\in \mathbb{P} V_{n}.$ To prove the second one, we only need to compute the gradient of $F_{n}: V_{n}\setminus\{0\} \rightarrow \mathbb{R}, ~F_{n}(\mu)=\frac{\operatorname{tr} \textnormal{M}_{\mu}^{2}}{\|\mu\|^{4}}$, and then to project it via $\pi_{*}.$
If $\mu, \lambda \in V_{n}$ with $\mu\ne 0$, then
\begin{align*}
\operatorname{Re}\langle\operatorname{grad}(F_{n})_{\mu}, \lambda\rangle
=&\left.\frac{\textnormal{d}}{\textnormal{d}t }\right|_{t=0} F_{n}(\mu+t\lambda)
=\left.\frac{\textnormal{d}}{\textnormal{d}t }\right|_{t=0} \frac{1}{\|\mu+t\lambda\|^{4}}( \textnormal{M}_{\mu+t\lambda},\textnormal{M}_{\mu+t\lambda})\\
=&-4\operatorname{Re}\langle \frac{F_{n}(\mu)}{\|\mu\|^{2}}\mu, \lambda\rangle
+\frac{2}{\|\mu\|^{4}}( \left.\frac{\textnormal{d}}{\textnormal{d}t}\right|_{t=0}\textnormal{M}_{\mu+t\lambda},\textnormal{M}_{\mu})
\end{align*}
We claim that
$(\left.\frac{\textnormal{d}}{\textnormal{d}t }\right|_{t=0}\textnormal{M}_{\mu+t\lambda},A)=4\operatorname{Re}\langle A.\mu,\lambda\rangle$ for any $A\in \textnormal{i}\mathfrak{u}(n).$
Indeed, by Lemma~\ref{Mm},  $(\left.\frac{\textnormal{d}}{\textnormal{d}t }\right|_{t=0}\textnormal{M}_{\mu+t\lambda},A)
=\left.\frac{\textnormal{d}}{\textnormal{d}t }\right|_{t=0}(\textnormal{M}_{\mu+t\lambda},A)
=2\left.\frac{\textnormal{d}}{\textnormal{d}t }\right|_{t=0}\langle \mu+t\lambda,A.(\mu+t\lambda)\rangle=2\langle \lambda,A.\mu\rangle+2\langle \mu,A.\lambda\rangle
=4\operatorname{Re}\langle A.\mu,\lambda\rangle$ for any $A\in \textnormal{i}\mathfrak{u}(n).$ It follows that $\operatorname{grad}(F_{n})_{\mu}=-4\frac{F_{n}(\mu)}{\|\mu\|^{2}}\mu+8\frac{(\textnormal{M}_{\mu}).\mu}{\|\mu\|^{4}},$ and consequentely
\begin{align*}
\operatorname{grad}(F_{n})_{[\mu]}=\frac{8\pi_{*} (\textnormal{M}_{\mu}).\mu}{\|\mu\|^{4}}.
\end{align*}
Thus the first part of the theorem is proved, and the following is to prove the equivalence among  the statements  $\textnormal{(i)},\textnormal{(ii)}$ and $\textnormal{(iii)}.$

$\textnormal{(i)}\Leftrightarrow\textnormal{(ii)}:$  The equivalence follows from that $\operatorname{grad}(F_{n})$ is tangent to the $\textnormal{GL}(n)$-orbits. Indeed
\begin{align*}
\operatorname{grad}(F_{n})_{[\mu]}=\frac{8\pi_{*} (\textnormal{M}_{\mu}).\mu}{\|\mu\|^{4}}=\frac{8}{\|\mu\|^{4}}\pi_{*}  (\left.\frac{\textnormal{d}}{\textnormal{d}t}\right|_{t=0}  e^{t\textnormal{M}_{\mu}}.\mu)
=\frac{8}{\|\mu\|^{4}}\left.\frac{\textnormal{d}}{\textnormal{d}t}\right|_{t=0}  e^{t\textnormal{M}_{\mu}}.[\mu]\in T_{[\mu]}(\textnormal{GL}(n).[\mu]).
\end{align*}

$\textnormal{(iii)}\Rightarrow\textnormal{(i)}:$ By (\ref{gaction}), we know that $I.\mu=-\mu,$  and $(\textnormal{M}_{\mu}).\mu=(c_\mu I+D_\mu).\mu=-c_\mu \mu.$ It follows that $\operatorname{grad}(F_{n})_{[\mu]}=0.$

$\textnormal{(i)}\Rightarrow\textnormal{(iii)}:$  Since $\operatorname{grad}(F_{n})_{[\mu]}=0,$ then $(\textnormal{M}_{\mu}).\mu\in\ker\pi_{*\mu}=\mathbb{C}\mu.$
 So $\textnormal{M}_{\mu}=c I+D$ for some $c\in \mathbb{C}$ and $D \in \textnormal{Der}(\mu).$ Clearly $[D,D^{*}]=[\textnormal{M}_{\mu}-cI,\textnormal{M}_{\mu}-\bar{c}I]=0,$ we conclude by Corollary~\ref{MD} that $D^{*}$ is also a derivation of $\mu.$ In particular, $(c-\bar{c})I=(D^{*}-D)\in \textnormal{Der}(\mu),$ thus $c=\bar{c}\in  \mathbb{R}.$
\end{proof}

\begin{remark}\label{crisum}
Let $[\mu]$ be a critical point of $F_{n}$ and $[\lambda]$ be a critical point of $F_{m}$, then $[\mu\oplus c\lambda]$ is a critical point of $F_{n+m}$ for a suitable $c\in\mathbb{C}.$  Indeed, assume that  $\textnormal{M}_{\mu}=c_{\mu} I+D_{\mu}$ for some $c_{\mu} \in \mathbb{R}$,  $D_{\mu} \in \textnormal{Der}(\mu)$, and $\textnormal{M}_{\lambda}=c_{\lambda} I+D_{\lambda}$ for some $c_{\lambda} \in \mathbb{R}$,  $D_{\lambda} \in \textnormal{Der}(\lambda)$. Noting that $\textnormal{M}_{t\lambda}=|t|^2\textnormal{M}_{\lambda}$ for any $t\in \mathbb{C}$, we can choose $t_0$ such that $c_\mu=|t_0|^2c_{\lambda}$,  then it follows that $[\mu\oplus t_0\lambda]$ is a critical point of $F_{n+m}.$
\end{remark}

In the frame of algebras, a remarkable result due to Ness can be stated as follows

\begin{theorem}[\cite{Ness1984}]\label{min_cri} If $[\mu]$ is a critical point of the functional $F_{n}: \mathbb{P} V_{n} \rightarrow \mathbb{R},$  then
\begin{enumerate}
\item [(i)] $\left.F_{n}\right|_{\mathrm{G} L(n) .[\mu]}$ attains its minimum value at $[\mu]$.
\item [(ii)] $[\lambda] \in \mathrm{GL} (n).[\mu]$ is a critical point of $F_{n}$ if and only if $[\lambda] \in \mathrm{U}(n).[\mu]$.
\end{enumerate}
\end{theorem}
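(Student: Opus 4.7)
The plan is to establish Ness's theorem by combining the $U(n)$-equivariance of $m$ with the convexity of a suitable function along one-parameter subgroups generated by Hermitian matrices. Since $m$ is $U(n)$-equivariant and $(\cdot,\cdot)$ is $\operatorname{Ad}(U(n))$-invariant, $F_n$ is $U(n)$-invariant, so via the Cartan decomposition $\textnormal{GL}(n)=U(n)\cdot\exp(\textnormal{i}\mathfrak{u}(n))$, the restriction $F_n|_{\textnormal{GL}(n).[\mu]}$ is determined by its values on $\{[\exp(A).\mu]:A\in\textnormal{i}\mathfrak{u}(n)\}$. Hence it suffices to analyze, for each $A\in\textnormal{i}\mathfrak{u}(n)$, the one-variable function $f_A(t):=F_n([\exp(tA).\mu])$.

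The analytic heart of the argument is the convexity of the auxiliary function $\psi_A(t):=\log\|\exp(tA).\mu\|^{2}$. I would prove this by diagonalizing the Hermitian operator induced by $A$ on $V_n$: writing $\mu=\sum c_i v_i$ in an eigenbasis with $A.v_i=\lambda_i v_i$ and $\lambda_i\in\mathbb{R}$, one gets $\|\exp(tA).\mu\|^{2}=\sum|c_i|^{2}e^{2\lambda_i t}$, a log-sum-exp, which is classically convex and strictly so unless $A.\mu\in\mathbb{C}\mu$. Differentiation together with Lemma~\ref{Mm} gives $\psi_A'(t)=(\textnormal{M}_{\exp(tA).\mu},A)/\|\exp(tA).\mu\|^{2}$, linking the convexity statement directly to the moment map.

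For part (i), I would combine the convexity with the fact that $\operatorname{grad}(F_n)$ is tangent to $\textnormal{GL}(n)$-orbits (established in the proof of Theorem~\ref{MID}): the negative gradient flow of $F_n$ starting at any $[\lambda]\in\textnormal{GL}(n).[\mu]$ stays inside the orbit and decreases $F_n$, and strict convexity of $\psi_A$ in directions transverse to the isotropy forces the flow to converge to a critical point in $U(n).[\mu]$, yielding $F_n([\mu])\leq F_n([\lambda])$. For part (ii), if $[\lambda]\in\textnormal{GL}(n).[\mu]$ is another critical point, write $[\lambda]=[u\exp(A).\mu]$ with $u\in U(n)$ and $A\in\textnormal{i}\mathfrak{u}(n)$; by $U(n)$-invariance, $f_A$ is convex and attains its minimum at both $t=0$ and $t=1$, hence is constant, which via Theorem~\ref{MID} and Corollary~\ref{MD} forces $A\in\textnormal{Der}(\mu)$ with $A=A^{*}$, placing $\exp(A)$ in the isotropy of $[\mu]$ and giving $[\lambda]\in U(n).[\mu]$.

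The main obstacle will be the convergence/uniqueness step in paragraph three: upgrading the directional convexity of $\psi_A$ (which controls $\log\|\cdot\|^{2}$ rather than $F_n$ itself) to a genuine global minimum property of $F_n$ on $\textnormal{GL}(n).[\mu]$ does not follow formally, and the cleanest route is to invoke either a \L{}ojasiewicz gradient inequality for the real-analytic function $F_n$ along the orbit, or the Kirwan--Hesselink stratification of $\mathbb{P}V_n$ by the negative gradient flow of $F_n$. Both frameworks yield the required statement that each orbit closure contains a unique $U(n)$-orbit of critical points of $F_n$ realizing the infimum of $F_n$ on the orbit.
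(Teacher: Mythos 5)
The paper itself offers no proof of this theorem: it is stated as a quotation of Ness \cite{Ness1984}, so there is no internal argument to compare yours against. Judged on its own, your sketch correctly isolates the standard preliminary ingredients --- the $\mathrm{U}(n)$-invariance of $F_n$, the Cartan decomposition, and the convexity of the Kempf--Ness function $\psi_A(t)=\log\|\exp(tA).\mu\|^2$ with $\psi_A'(t)=(\textnormal{M}_{\exp(tA).\mu},A)/\|\exp(tA).\mu\|^2$ --- but neither half of the theorem is actually derived from them.

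The argument for (ii) fails at the assertion that $f_A(t)=F_n([\exp(tA).\mu])$ is convex. This is false, and demonstrably so: $F_n$ is continuous on the compact space $\mathbb{P}V_n$, hence bounded, so each $f_A$ is a bounded function on $\mathbb{R}$; a convex function on $\mathbb{R}$ that is bounded above is constant, so convexity of every $f_A$ would make $F_n$ constant on every $\mathrm{GL}(n)$-orbit, and since $\operatorname{grad}(F_n)$ is tangent to the orbits this would force $F_n$ to be constant on all of $\mathbb{P}V_n$ --- contradicting the distinct critical values $\tfrac{4}{n},\dots,20$ computed in the paper. What is convex is $\psi_A$, i.e.\ the $A$-component of $m$ along the geodesic, not $\|m\|^2$; the two cannot be interchanged, and the "two minima of a convex function" step of (ii) collapses with it.

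For (i), the gap is the one you flag yourself, and it is not a technicality: the negative gradient flow starting at $[\lambda]$ stays in the orbit for finite time, but its limit lies only in the orbit \emph{closure}, which always contains degenerations of $\mu$ and typically contains critical points of strictly smaller $F_n$-value on lower-dimensional strata. Concluding that the limit lies in $\mathrm{U}(n).[\mu]$ rather than in a boundary orbit is precisely the content of Ness's theorem, so "invoke the Kirwan--Hesselink stratification" is circular --- that stratification theorem has this statement as one of its main constituents. A self-contained proof has to bring in an ingredient absent from your sketch, namely the distinguished direction $\beta=m([\mu])$ (which fixes $[\mu]$ because $\textnormal{M}_\mu.\mu\in\mathbb{C}\mu$), the monotonicity of $t\mapsto(m([\exp(t\beta).\lambda]),\beta)$, the Cauchy--Schwarz bound $\|m\|\geq(m,\beta)/\|\beta\|$, and a limiting argument along this one-parameter subgroup. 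As written, the proposal is a correct reduction to the hard step plus a citation of that hard step, not a proof.
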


In fact, the above theorem  implies that up to $\mathrm{U}(n)$-orbit, $\mathrm{GL} (n).[\mu]$ contains at most one critical point for each $[\mu]\in \mathbb{P} V_{n}.$

\begin{Lemma}\label{c}
Let $[\mu]\in\mathbb{P}V_n$ be a critical point of $F_n$ with $\textnormal{M}_{\mu}=c_{\mu} I+D_{\mu}$ for some $c_{\mu} \in \mathbb{R}$ and $D_{\mu} \in \textnormal{Der}(\mu)$. Then we have
\begin{enumerate}
\item [(i)] $c_\mu=\frac{\operatorname{tr}\textnormal{M}_{\mu}^{2}}{\operatorname{tr}\textnormal{M}_{\mu}}=-\frac{1}{2}\frac{\operatorname{tr}\textnormal{M}_{\mu}^{2}}{\|\mu\|^{2}}<0.$
\item [(ii)] If $\operatorname{tr}D_\mu\ne0$, then $c_\mu=-\frac{\operatorname{tr}{D}_{\mu}^{2}}{\operatorname{tr}{D}_{\mu}}$ and $\operatorname{tr}D_\mu>0.$
\end{enumerate}
\end{Lemma}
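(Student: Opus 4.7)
The plan is to derive two linear identities relating $c_\mu$, $\operatorname{tr}\textnormal{M}_\mu$, $\operatorname{tr}\textnormal{M}_\mu^{2}$ and the trace/norm data of $D_\mu$, then read off both claims.

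First I would compute $\operatorname{tr}\textnormal{M}_\mu$. By the $\mathfrak{gl}(n)$-action formula (\ref{gaction}) applied to $A=I$, one has $I.\mu=-\mu$. Plugging $A=I$ into the identity $(\textnormal{M}_\mu,A)=2\langle\mu,A.\mu\rangle$ of Lemma~\ref{Mm} then gives $\operatorname{tr}\textnormal{M}_\mu=-2\|\mu\|^{2}$. Next I would compute $\operatorname{tr}\textnormal{M}_\mu^{2}$ by inserting the decomposition $\textnormal{M}_\mu=c_\mu I+D_\mu$ into one of the two factors:
\begin{equation*}
\operatorname{tr}\textnormal{M}_\mu^{2}=\operatorname{tr}\textnormal{M}_\mu(c_\mu I+D_\mu)=c_\mu\operatorname{tr}\textnormal{M}_\mu+\operatorname{tr}(\textnormal{M}_\mu D_\mu),
\end{equation*}
and then invoke Corollary~\ref{MD}(i), which kills the last term since $D_\mu\in\textnormal{Der}(\mu)$. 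The combination of the two identities yields $c_\mu=\operatorname{tr}\textnormal{M}_\mu^{2}/\operatorname{tr}\textnormal{M}_\mu=-\tfrac{1}{2}\operatorname{tr}\textnormal{M}_\mu^{2}/\|\mu\|^{2}$. For the strict sign, I would observe that $\textnormal{M}_\mu$ is Hermitian (it lies in $\textnormal{i}\mathfrak{u}(n)$) and nonzero, because its trace equals $-2\|\mu\|^{2}\neq 0$; hence $\operatorname{tr}\textnormal{M}_\mu^{2}=\|\textnormal{M}_\mu\|^{2}>0$, forcing $c_\mu<0$. This settles (i).

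For (ii) I would take trace and trace-of-square of $\textnormal{M}_\mu=c_\mu I+D_\mu$:
\begin{equation*}
\operatorname{tr}\textnormal{M}_\mu=nc_\mu+\operatorname{tr}D_\mu,\qquad \operatorname{tr}\textnormal{M}_\mu^{2}=nc_\mu^{2}+2c_\mu\operatorname{tr}D_\mu+\operatorname{tr}D_\mu^{2}.
\end{equation*}
Substituting these into the relation $\operatorname{tr}\textnormal{M}_\mu^{2}=c_\mu\operatorname{tr}\textnormal{M}_\mu$ obtained in (i), the $nc_\mu^{2}$-terms cancel and I am left with $c_\mu\operatorname{tr}D_\mu+\operatorname{tr}D_\mu^{2}=0$. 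Under the hypothesis $\operatorname{tr}D_\mu\neq 0$ this immediately gives $c_\mu=-\operatorname{tr}D_\mu^{2}/\operatorname{tr}D_\mu$.

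Finally, for the sign $\operatorname{tr}D_\mu>0$, I would note that $D_\mu=\textnormal{M}_\mu-c_\mu I$ is Hermitian (difference of two Hermitian operators, since $c_\mu\in\mathbb{R}$), so $\operatorname{tr}D_\mu^{2}=\|D_\mu\|^{2}\geq 0$, with equality only when $D_\mu=0$, a case excluded by $\operatorname{tr}D_\mu\neq 0$. Thus $\operatorname{tr}D_\mu^{2}>0$, and combined with $c_\mu<0$ from (i) the identity $c_\mu\operatorname{tr}D_\mu=-\operatorname{tr}D_\mu^{2}$ forces $\operatorname{tr}D_\mu>0$. I do not expect a real obstacle here; the only subtlety is remembering that Hermiticity of $D_\mu$ (which is what makes $\operatorname{tr}D_\mu^{2}$ a genuine squared norm) comes for free from the decomposition in Theorem~\ref{MID}(iii) with $c_\mu\in\mathbb{R}$.
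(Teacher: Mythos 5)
Your proposal is correct and follows essentially the same route as the paper: $\operatorname{tr}\textnormal{M}_\mu=-2\|\mu\|^{2}$ via Lemma~\ref{Mm} with $A=I$, the cross-term killed by Corollary~\ref{MD}(i), and Hermiticity of $\textnormal{M}_\mu$ and $D_\mu$ for the signs. Your derivation of $c_\mu\operatorname{tr}D_\mu+\operatorname{tr}D_\mu^{2}=0$ by expanding $\operatorname{tr}\textnormal{M}_\mu$ and $\operatorname{tr}\textnormal{M}_\mu^{2}$ is a slightly longer path to the identity the paper reads off directly from $0=\operatorname{tr}\textnormal{M}_\mu D_\mu$, but it is the same identity and the argument is sound.
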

\begin{proof}
Since $\textnormal{M}_{\mu}=c_{\mu} I+D_{\mu}$, by  Lemma~\ref{Mm} and Corollary~\ref{MD} we have
\begin{align*}
\operatorname{tr}\textnormal{M}_\mu&=(\textnormal{M}_{\mu},I)=2\langle\mu,I.\mu \rangle=-2\|\mu\|^{2}<0,\\
\operatorname{tr}\textnormal{M}_\mu^{2}&=\operatorname{tr}\textnormal{M}_\mu (c_{\mu} I+D_{\mu})=c_\mu\operatorname{tr}\textnormal{M}_\mu.
\end{align*}
So $c_\mu=\frac{\operatorname{tr}\textnormal{M}_{\mu}^{2}}{\operatorname{tr}\textnormal{M}_{\mu}}=-\frac{1}{2}\frac{\operatorname{tr}\textnormal{M}_{\mu}^{2}}{\|\mu\|^{2}}<0.$ If $\operatorname{tr}D_\mu\ne0$, then
\begin{align*}
0=\operatorname{tr}\textnormal{M}_\mu D_{\mu}=c_\mu\operatorname{tr}D_\mu+\operatorname{tr}D_\mu^{2}.
\end{align*}
So $c_\mu=-\frac{\operatorname{tr}{D}_{\mu}^{2}}{\operatorname{tr}{D}_{\mu}}$ and $\operatorname{tr}D_\mu>0.$
\end{proof}

\begin{remark}\label{D=0}
In fact, $\operatorname{tr}D_\mu=0$ if and only if $D_\mu=0.$ Indeed, it follows from  that  $0=\operatorname{tr}\textnormal{M}_\mu D_\mu=c_\mu\operatorname{tr}D_\mu+\operatorname{tr}D_\mu^{2}$ and $D_\mu$ is Hermitian.
\end{remark}

\section{The critical points of the variety of   associative  algebras }\label{section4}
The space $\mathscr{A}_n$ of all $n$-dimensional  associative algebras   is an   algebraic set since it is given by polynomial conditions.
Denote by $\mathcal{A}_n$   the projective algebraic variety obtained by projectivization of $\mathscr{A}_n$ .  Note that $\mathcal{A}_n$  is $\textnormal{GL}(n)$-invariant,  then by Theorem~\ref{MID},  the critical points of $F_{n}: \mathcal{A}_n \rightarrow \mathbb{R}$ are precisely the critical points
of $F_n:\mathbb{P}V_n\rightarrow\mathbb{R}$ that lie in $\mathcal{A}_n$.

\subsection{The   Nikolayevsky derivation and the rationality}
A derivation of $\phi$ of an algebra $(\mu,\mathbb{C}^n)$ is called a \textit{Nikolayevsky derivation}, if it is semisimple with all eigenvalues real, and $\operatorname{tr}\phi \psi=\operatorname{tr}\psi$ for any $\psi\in \textnormal{Der}(\mu).$ This notion is motivated by \cite{Nik2011}.

\begin{theorem}\label{Nik}
Let $(\mu,\mathbb{C}^n)$ be an arbitrary  algebra. Then
\begin{enumerate}
\item [(1)] $(\mu,\mathbb{C}^n)$ admits  a  Nikolayevsky derivation $\phi_\mu$.
\item [(2)] The Nikolayevsky derivation $\phi_\mu$ is determined up to automorphism of $\mu$.
\item [(3)] All eigenvalues of $\phi_\mu$ are rational numbers.
\end{enumerate}
If moreover, $[\mu]$ is a critical point of $F_n:\mathbb{P}V_n\rightarrow\mathbb{R}$ with $\textnormal{M}_{\mu}=c_{\mu} I+D_{\mu}$ for some $c_{\mu} \in \mathbb{R}$ and $D_{\mu} \in \textnormal{Der}(\mu)$, then $-\frac{1}{c_\mu}D_\mu$ is the Nikolayevsky derivation of $\mu$.
\end{theorem}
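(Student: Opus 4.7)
The plan is to realize $\phi_\mu$ as the unique element of a maximal real torus of $\operatorname{Der}(\mu)$ representing the trace functional, to deduce uniqueness from conjugacy of maximal tori under $\operatorname{Aut}(\mu)$, to establish rationality from a $\mathbb{Q}$-structure on the torus coming from the derivation relations in a weight basis, and to handle the critical-point assertion by a short trace computation based on Corollary \ref{MD}(i).

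For part (1), let $\mathfrak{d}=\operatorname{Der}(\mu)$ and choose a maximal torus $\mathfrak{t}\subset\mathfrak{d}$, that is, a maximal abelian subalgebra of $\mathfrak{d}$ consisting of semisimple elements; let $\mathfrak{t}_{\mathbb{R}}\subset\mathfrak{t}$ be the real subspace of derivations with real spectrum. On $\mathfrak{t}_{\mathbb{R}}$ the trace form $B(\phi,\psi)=\operatorname{tr}\phi\psi$ is positive definite, because in a common eigenbasis it equals $\sum_i\lambda_i(\phi)\lambda_i(\psi)$ with real $\lambda_i$, so there is a unique $\phi_\mu\in\mathfrak{t}_{\mathbb{R}}$ with $B(\phi_\mu,\psi)=\operatorname{tr}\psi$ for every $\psi\in\mathfrak{t}_{\mathbb{R}}$. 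To extend this identity to an arbitrary $\psi\in\mathfrak{d}$, I decompose $\mathfrak{d}$ into $\operatorname{ad}\phi_\mu$-weight spaces. Working in an eigenbasis $\{X_1,\dots,X_n\}$ of $\phi_\mu$, a weight-$\alpha$ element $\psi=(\psi_{ij})$ satisfies $\psi_{ij}=0$ unless $\lambda_i-\lambda_j=\alpha$, so both $\operatorname{tr}\psi$ and $\operatorname{tr}\phi_\mu\psi$ vanish when $\alpha\ne 0$. This reduces the statement to $\psi\in\mathfrak{c}=\mathfrak{z}_{\mathfrak{d}}(\phi_\mu)$. Writing $\psi=\psi_s+\psi_n$ via Jordan decomposition (both pieces remain in $\mathfrak{c}\cap\mathfrak{d}$), the commuting pair $\phi_\mu,\psi_n$ gives $\operatorname{tr}\phi_\mu\psi_n=0=\operatorname{tr}\psi_n$, while $\psi_s$ extends $\mathbb{C}\phi_\mu$ to an abelian subalgebra of semisimple elements and hence can be placed in $\mathfrak{t}$ by an automorphism of $\mu$ fixing $\phi_\mu$ (conjugacy of maximal tori in the reductive centralizer $\mathfrak{c}$); the identity for $\psi_s$ then follows from the torus case and trace invariance.

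For part (2), any two Nikolayevsky derivations are semisimple with real spectrum, so each lies in a real maximal torus of $\mathfrak{d}$; conjugacy of maximal tori in $\operatorname{Aut}(\mu)^{0}$ allows one to assume both lie in $\mathfrak{t}_{\mathbb{R}}$, where the uniqueness of the $B$-representative of $\operatorname{tr}$ identifies them. For part (3), fix an eigenbasis $\{X_1,\dots,X_n\}$ of $\phi_\mu$; elements of $\mathfrak{t}_{\mathbb{R}}$ are real diagonal matrices $\operatorname{diag}(\lambda_1,\dots,\lambda_n)$ subject to $\lambda_k=\lambda_i+\lambda_j$ for every $i,j,k$ with $\mu(X_i,X_j)=\sum_k c^{k}_{ij}X_k$ and $c^{k}_{ij}\ne 0$. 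These are integer-coefficient linear constraints, so $\mathfrak{t}_{\mathbb{R}}$ has a natural $\mathbb{Q}$-form $\mathfrak{t}_{\mathbb{Q}}$; the Gram matrix of $B$ and the functional $\operatorname{tr}$ are both rational on $\mathfrak{t}_{\mathbb{Q}}$, $B$ stays nondegenerate there, and the defining equation for $\phi_\mu$ therefore has a rational solution, which by uniqueness coincides with $\phi_\mu$.

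For the critical-point statement, assume $\textnormal{M}_\mu=c_\mu I+D_\mu$ with $D_\mu\in\operatorname{Der}(\mu)$. Since $\textnormal{M}_\mu$ and $I$ are Hermitian, so is $D_\mu$, which is therefore semisimple with real eigenvalues. By Corollary \ref{MD}(i), for every $\psi\in\operatorname{Der}(\mu)$,
\begin{equation*}
0=\operatorname{tr}\textnormal{M}_\mu\psi=c_\mu\operatorname{tr}\psi+\operatorname{tr}D_\mu\psi,
\end{equation*}
and since $c_\mu<0$ by Lemma \ref{c}, division yields $\operatorname{tr}\bigl(-\tfrac{1}{c_\mu}D_\mu\bigr)\psi=\operatorname{tr}\psi$; thus $-\tfrac{1}{c_\mu}D_\mu$ satisfies both defining conditions of a Nikolayevsky derivation and equals $\phi_\mu$ up to automorphism by part (2). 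The main technical obstacle will be the extension of $B(\phi_\mu,\psi)=\operatorname{tr}\psi$ from $\mathfrak{t}_{\mathbb{R}}$ to the whole of $\operatorname{Der}(\mu)$: one has to splice together cleanly the $\operatorname{ad}\phi_\mu$-weight decomposition, the Jordan decomposition of derivations, and the conjugacy of maximal tori inside the centralizer of $\phi_\mu$.
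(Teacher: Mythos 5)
Your argument is correct and reaches the same conclusions, but it is organized around a genuinely different decomposition of $\operatorname{Der}(\mu)$ than the paper's. The paper works with the Levi--Mal'cev decomposition $\operatorname{Der}(\mu)=\mathfrak{s}\oplus\mathfrak{t}\oplus\mathfrak{n}$ and places $\phi_\mu$ in the \emph{central} torus $\mathfrak{t}$ of the reductive part; the extension of $\operatorname{tr}\phi_\mu\psi=\operatorname{tr}\psi$ from $\mathfrak{t}$ to all of $\operatorname{Der}(\mu)$ is then immediate, because $\operatorname{tr}$ vanishes on $\mathfrak{s}=[\mathfrak{s},\mathfrak{s}]$ and on $\mathfrak{n}$, while $B(\mathfrak{t},\mathfrak{s})=0$ and $\mathfrak{n}=\operatorname{Ker}B$. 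You instead put $\phi_\mu$ in a full maximal torus and pay for it with the splicing you flag at the end (weight decomposition under $\operatorname{ad}\phi_\mu$, Jordan decomposition, conjugacy of maximal tori inside the centralizer); this works, but note that $\mathfrak{z}_{\mathfrak{d}}(\phi_\mu)$ need not be reductive --- what you actually need is conjugacy of maximal tori in an arbitrary connected algebraic group, which is true but should be cited as such rather than deduced from reductivity. Both your proof and the paper's silently use that a maximal torus of the algebraic Lie algebra $\operatorname{Der}(\mu)$ is spanned over $\mathbb{C}$ by its elements with real spectrum, so you are no worse off there. For uniqueness the paper invokes conjugacy of maximal fully reducible subalgebras where you invoke conjugacy of maximal tori; the effect is the same. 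For rationality your $\mathbb{Q}$-form argument (the real torus is cut out by the integral equations $\lambda_i+\lambda_j=\lambda_k$, the Gram matrix of $B$ and the functional $\operatorname{tr}$ are rational on it, hence the unique representative is rational) is a cleaner packaging of what the paper does explicitly with the matrices $E$, $EE^{T}$, $ED^{-1}E^{T}$. The critical-point computation is identical to the paper's.
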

\begin{proof}
(1) The complex Lie algebra $\textnormal{Der}(\mu)$ is algebraic. Let $\textnormal{Der}(\mu)=\mathfrak{s}\oplus\mathfrak{t}\oplus\mathfrak{n}$ be its Levi-Mal'cev decomposition, where $\mathfrak{s}$ is semisimple, $\mathfrak{t}\oplus\mathfrak{n}$ is the radical of $\textnormal{Der}(\mu)$,  $\mathfrak{n}$ is the set of all nilpotent elements in $\mathfrak{t}\oplus\mathfrak{n}$ (and is the nilradical of $\mathfrak{t}\oplus\mathfrak{n}$),  $\mathfrak{t}$ is an abelian subalgebra consisting of semisimple elements, and $[\mathfrak{s},\mathfrak{t}]=0.$
Define the bilinear form $B$ on $\textnormal{Der}(\mu)$ by $$B(\psi_1,\psi_2):=\operatorname{tr}\psi_1\psi_2,\quad \forall \psi_1,\psi_2\in \textnormal{Der}(\mu).$$  Then, in general, $B$ is degenerate, and $\operatorname{Ker}B=\mathfrak{n}.$ Since $\mathfrak{s}$ is semisimple, then  $B(\mathfrak{s},\mathfrak{t})=B([\mathfrak{s},\mathfrak{s}],\mathfrak{t})=B(\mathfrak{s},[\mathfrak{s},\mathfrak{t}])=0.$ Clearly, $B$ is nondegenerate on $\mathfrak{t}.$ Since  $\mathfrak{t}$ is reductive, we have $\mathfrak{t}=\mathfrak{a}+\textnormal{i}\mathfrak{a}$, where $\mathfrak{a}$ consists of semisimple elements with all eigenvalues real. It follows that there exists a unique element $\phi\in\mathfrak{a}$ such that $B(\phi, \psi)=\operatorname{tr}\psi$ for any $\psi\in \mathfrak{t}.$ Thus $\operatorname{tr}\phi \psi=\operatorname{tr}\psi$ for any $\psi\in \textnormal{Der}(\mu).$

(2) The subalgebra $\mathfrak{s}\oplus\mathfrak{t}$ is a maximal fully reducible subalgebra of $\textnormal{Der}(\mu).$ Since the maximal fully reducible subalgebras of $\textnormal{Der}(\mu)$ are conjugate by inner automorphism of $\textnormal{Der}(\mu)$ (which corresponds to an automorphism of $\mu$), and then the center $\mathfrak{t}$ of $\mathfrak{s}\oplus\mathfrak{t}$, is defined uniquely, up to automorphism.  So the Nikolayevsky derivation  is determined up to automorphism of $\mu$.

(3) The case $\phi_\mu=0$ is trivial. In the following, we assume that $\phi_\mu$ is nonzero. Note that $\phi_\mu$ is  simisimple with all eigenvalues real,  we have the following  decomposition
\begin{align*}
\mathbb{C}^{n}=\mathfrak{l}_1\oplus\mathfrak{l}_2\oplus\cdots\oplus\mathfrak{l}_r,
\end{align*}
where $\mathfrak{l}_i=\{X\in\mathbb{C}^{n}|\phi_\mu X=c_i X\}$ are eigenspaces of $\phi_\mu$ corresponding to eigenvalues   $c_1<c_2<\cdots<c_r\in\mathbb{R},$ respectively. Set $d_i=\dim\mathfrak{l}_i\in\mathbb{N},1\leq i\leq r.$ Since $\phi_\mu$ is a derivation, we have the following  relations
\begin{align*}
\mu(\mathfrak{l}_{i},\mathfrak{l}_{j})\subset\mathfrak{l}_{k}\quad \text{if}~ c_{i}+c_{j}=c_{k},
\end{align*}
for all $1\leq i,j,k\leq r.$ Conversely, if we define a linear transformation $\psi:\mathbb{C}^{n}\rightarrow\mathbb{C}^{n}$ by $\psi|_{\mathfrak{l}_i}=a_{i}\textnormal{Id}_{\mathfrak{l}_i}$, where $a_1,a_2,\cdots,a_r\in\mathbb{R}$ satisfy $a_i+a_j=a_k$ for all $1\leq i,j,k\leq r$ such that $c_i+c_j=c_k,$ then $\psi$ is a  derivation of $\mu.$ Clearly, all such derivations form  a real  vector space, which can be identified with
$W:=\{(w_1,w_2,\cdots,w_r)\in\mathbb{R}^{r}|w_{i}+w_{j}=w_{k}~\text{if}~ c_{i}+c_{j}=c_{k}\}.$  We endow $\mathbb{R}^{r}$ with the usual inner product, i.e.
\begin{align}\label{Rproduct}
\langle x,y\rangle=\sum_{i}x_iy_i,
\end{align}
for any $x=(x_1,x_2,\cdots,x_r),~y=(y_1,y_2,\cdots,y_r)\in\mathbb{R}^{r}.$

For any derivation $\psi\in W$, we have
\begin{align*}
\operatorname{tr}(\phi_\mu- I) \psi=\operatorname{tr}\phi_\mu\psi-\operatorname{tr}\psi=0.
\end{align*}
 Then we see that
$(d_1(c_1-1),d_2(c_2-1),\cdots,d_r(c_r-1))\perp W$ relative to (\ref{Rproduct}). Put $F:=W^{\perp}$, then by definition we have
\begin{align*}
F=\textnormal{span}_{1\leq i,j,k\leq r}\{e_{i}+e_{j}-e_{k}:c_{i}+c_{j}=c_{k}\},
\end{align*}
where $e_i$ belongs  to $\mathbb{R}^{r}$ having $1$ in the $i$-th position and $0$ elsewhere. Let $\{e_{i_{1}}+e_{j_{1}}-e_{k_{1}},\cdots,e_{i_{s}}+e_{j_{s}}-e_{k_{s}}\}$ be a basis of $F,$  then
\begin{align}\label{Rational}
(d_1(c_1-1),d_2(c_2-1),\cdots,d_r(c_r-1))=\sum_{p=1}^{s}b_p(e_{i_{p}}+e_{j_{p}}-e_{k_{p}}),
\end{align}
for some $b_1,b_2,\cdots,b_s\in \mathbb{R}.$
Put
\begin{align*}
E=\left(\begin{array}{c}
e_{i_{1}}+e_{j_{1}}-e_{k_{1}}\\
e_{i_{2}}+e_{j_{2}}-e_{k_{2}}\\
\vdots\\
e_{i_{s}}+e_{j_{s}}-e_{k_{s}}\\
\end{array}\right)\in \mathbb{Z}^{s\times r},
\end{align*}
then $EE^{T}\in \textnormal{GL}(s,\mathbb{Z}),$ and  $(EE^{T})^{-1}\in \textnormal{GL}(s,\mathbb{Q}).$ By (\ref{Rational}) and the definition of $E$, we have
\begin{align*}
\left(\begin{array}{c}
d_1(c_1-1)\\
d_2(c_2-1)\\
\vdots\\
d_r(c_r-1)
\end{array}\right)_{r\times 1}
=E^{T}\left(\begin{array}{c}
b_1\\
b_2\\
\vdots\\
b_s
\end{array}\right)_{s\times 1},~ E\left(\begin{array}{c}
c_1\\
c_2\\
\vdots\\
c_r
\end{array}\right)_{r\times 1}=\left(\begin{array}{c}
0\\
0\\
\vdots\\
0
\end{array}\right)_{s\times 1},~
\quad E\left(\begin{array}{c}
1\\
1\\
\vdots\\
1
\end{array}\right)_{r\times 1}=\left(\begin{array}{c}
1\\
1\\
\vdots\\
1
\end{array}\right)_{s\times 1}.
\end{align*}
By the left multiplication of $E$ on (\ref{Rational}), we have
\begin{align*}
\left(\begin{array}{c}
0\\
0\\
\vdots\\
0
\end{array}\right)_{s\times 1}-\left(\begin{array}{c}
1\\
1\\
\vdots\\
1
\end{array}\right)_{s\times 1}=ED^{-1}E^{T}\left(\begin{array}{c}
b_1\\
b_2\\
\vdots\\
b_s
\end{array}\right)_{s\times 1},
\end{align*}
where $D=\textnormal{diag}(d_1,d_2,\cdots,d_r).$ It is easy to see that  $(ED^{-1}E^{T})\in \textnormal{GL}(s,\mathbb{Q}).$ Consequently
\begin{align*}
D\left(\begin{array}{c}
c_1-1\\
c_2-1\\
\vdots\\
c_r-1
\end{array}\right)_{r\times 1}
=- E^{T}(ED^{-1}E^{T})^{-1}\left(\begin{array}{c}
1\\
1\\
\vdots\\
1
\end{array}\right)_{s\times 1},
\end{align*}
and
\begin{align*}
\left(\begin{array}{c}
c_1\\
c_2\\
\vdots\\
c_r
\end{array}\right)_{r\times 1}
=\left(\begin{array}{c}
1\\
1\\
\vdots\\
1
\end{array}\right)_{r\times 1}
- D^{-1}E^{T}(ED^{-1}E^{T})^{-1}\left(\begin{array}{c}
1\\
1\\
\vdots\\
1
\end{array}\right)_{s\times 1}\in \mathbb{Q}^{r}.
\end{align*}
So all eigenvalues of $\phi_\mu$ are  rational.

For the last statement, by Corollary~\ref{MD} we know that $0=\operatorname{tr}\textnormal{M}_{\mu}\psi=c_{\mu}\operatorname{tr}\psi+\operatorname{tr}D_\mu\psi$ for any $\psi\in\textnormal{Der}(\mu).$  Since $D_\mu$ is  Hermitian,    we conclude that $-\frac{1}{c_\mu}D_\mu$ is the Nikolayevsky derivation of $\mu$.
\end{proof}

By Theorem~\ref{Nik}, it is easy to obtain the following theorem.
\begin{theorem}\label{eigenvalue}
Let $[\mu]\in \mathbb{P}V_n$ be a critical point of $F_n:\mathbb{P}V_n\rightarrow\mathbb{R}$ with $\textnormal{M}_{\mu}=c_{\mu} I+D_{\mu}$ for some $c_{\mu} \in \mathbb{R}$ and $D_{\mu} \in \textnormal{Der}(\mu)$. Then there exists a constant $c>0$ such that the eigenvalues of $cD_\mu$ are  integers  prime to each other, say $k_1< k_2<\cdots < k_r \in \mathbb{Z}$ with multiplicities $d_1,d_2,\cdots,d_r\in \mathbb{N}.$
\end{theorem}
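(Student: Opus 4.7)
The plan is to obtain this as an almost immediate consequence of the two preceding results. First I would invoke Theorem~\ref{Nik}, which identifies $-\frac{1}{c_\mu}D_\mu$ with the Nikolayevsky derivation $\phi_\mu$ of $\mu$ and, crucially, guarantees that all its eigenvalues are rational real numbers. Since $D_\mu = \textnormal{M}_\mu - c_\mu I$ is a difference of two Hermitian operators (as $\textnormal{M}_\mu \in \textnormal{i}\mathfrak{u}(n)$ and $c_\mu \in \mathbb{R}$), it is itself Hermitian, so its eigenvalues are real; by the relation $\phi_\mu = -\frac{1}{c_\mu}D_\mu$ these eigenvalues are simply $-c_\mu$ times rationals. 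Lemma~\ref{c}(i) then supplies the essential sign information $c_\mu < 0$, so $-c_\mu > 0$.

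Next I would clear denominators in a standard way. Enumerate the eigenvalues of $\phi_\mu$ in increasing order as $e_1 < e_2 < \cdots < e_r \in \mathbb{Q}$ with multiplicities $d_1,\ldots,d_r$, and write them in lowest common form $e_i = a_i/q$ with $a_i \in \mathbb{Z}$, $q \in \mathbb{N}$ and $\gcd(a_1,\ldots,a_r) = 1$. Then set $c := -q/c_\mu$, which is strictly positive thanks to the preceding paragraph. Since $D_\mu = -c_\mu \phi_\mu$, the operator $cD_\mu$ has eigenvalues exactly $a_i \in \mathbb{Z}$, with the same multiplicities $d_i$, and the strict ordering $a_1 < a_2 < \cdots < a_r$ is preserved because $c > 0$. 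Setting $k_i := a_i$ yields the required normal form.

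I do not anticipate any substantive obstacle: the nontrivial content has already been absorbed into Theorem~\ref{Nik}(3) (rationality of the Nikolayevsky eigenvalues) and Lemma~\ref{c}(i) (negativity of $c_\mu$), and the present theorem is essentially a repackaging of that data into an integer-coprime normal form convenient for the classification work in the following section. The only point requiring mild care is ensuring that the normalization is global, i.e.\ that the integers $a_1,\ldots,a_r$ are coprime as a tuple rather than merely each in lowest terms, which is handled automatically by dividing out any common factor in the lowest-common-form representation.
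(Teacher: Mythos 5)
Your argument is correct and is exactly the route the paper takes: the paper derives this theorem directly from Theorem~\ref{Nik} (rationality of the eigenvalues of $\phi_\mu=-\frac{1}{c_\mu}D_\mu$) together with $c_\mu<0$, followed by the same clearing of denominators. No gaps; the normalization $c=-q/c_\mu$ and the coprimality check are handled correctly.
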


\begin{Definition}
The data set $(k_1<k_2<\cdots<k_r;d_1,d_2,\cdots,d_r)$ in \textnormal{Theorem}~\textnormal{\ref{eigenvalue}}  is called the
type of the critical point $[\mu].$
\end{Definition}

\begin{proposition}\label{CV}
Let $[\mu]\in\mathbb{P}V_n$ be a critical point of $F_n$ with type $\alpha=(k_1<k_2<\cdots<k_r;d_1,d_2,\cdots,d_r).$ Then we have
\begin{enumerate}
\item [(i)] If $\alpha=(0;n)$, then  $F_n ([\mu])=\frac{4}{n}.$
\item [(ii)] If $\alpha\ne(0;n)$, then  $F_n([\mu])=4\left(n-\frac{(k_1d_1+k_2d_2+\cdots+k_rd_r)^{2}}{k_1^{2}d_1+k_2^{2}d_2+\cdots+k_r^{2}d_r}\right)^{-1}.$
\end{enumerate}
\end{proposition}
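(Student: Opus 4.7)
\textbf{Proof proposal for Proposition~\ref{CV}.}

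The plan is to express $F_n([\mu])$ purely in terms of the quantities $\operatorname{tr} D_\mu$ and $\operatorname{tr} D_\mu^2$, and then to compute these two traces in terms of the type data $(k_i;d_i)$. All the inputs are already on the table: Theorem~\ref{MID} gives $F_n([\mu])=\operatorname{tr} M_\mu^2/\|\mu\|^4$ with $M_\mu=c_\mu I+D_\mu$; Lemma~\ref{c} controls $c_\mu$ in terms of $\operatorname{tr} M_\mu^2$, $\|\mu\|^2$, $\operatorname{tr} D_\mu$ and $\operatorname{tr} D_\mu^2$; and Theorem~\ref{eigenvalue} pins down the spectrum of $D_\mu$ up to a positive scalar $c$.

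First I would dispose of case (i). The type $\alpha=(0;n)$ forces the only eigenvalue of $cD_\mu$ to be $0$, so $D_\mu=0$ and $M_\mu=c_\mu I$. Taking traces in $\operatorname{tr} M_\mu=-2\|\mu\|^2$ (which is Lemma~\ref{Mm} applied to $A=I$) yields $c_\mu=-2\|\mu\|^2/n$, and substituting into $F_n([\mu])=\operatorname{tr} M_\mu^2/\|\mu\|^4=nc_\mu^2/\|\mu\|^4$ gives $F_n([\mu])=4/n$.

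For case (ii), $D_\mu\neq 0$, so by Remark~\ref{D=0} we have $\operatorname{tr} D_\mu\neq 0$ and Lemma~\ref{c}(ii) gives $c_\mu=-\operatorname{tr} D_\mu^2/\operatorname{tr} D_\mu$. Combining Lemma~\ref{c}(i), written as $\operatorname{tr} M_\mu^2=-2c_\mu\|\mu\|^2$, with $F_n([\mu])=\operatorname{tr} M_\mu^2/\|\mu\|^4$ gives the clean identity
\begin{equation*}
F_n([\mu])=\frac{-2c_\mu}{\|\mu\|^2}.
\end{equation*}
Next I would evaluate $\|\mu\|^2$ from $\operatorname{tr} M_\mu=nc_\mu+\operatorname{tr} D_\mu=-2\|\mu\|^2$, solving to obtain $\|\mu\|^2=(\operatorname{tr} D_\mu^2\cdot n/\operatorname{tr} D_\mu-\operatorname{tr} D_\mu)/2=(n\operatorname{tr} D_\mu^2-(\operatorname{tr} D_\mu)^2)/(2\operatorname{tr} D_\mu)$. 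Substituting both $c_\mu$ and $\|\mu\|^2$ back into the boxed identity yields
\begin{equation*}
F_n([\mu])=\frac{4\operatorname{tr} D_\mu^2}{n\operatorname{tr} D_\mu^2-(\operatorname{tr} D_\mu)^2}=4\Bigl(n-\tfrac{(\operatorname{tr} D_\mu)^2}{\operatorname{tr} D_\mu^2}\Bigr)^{-1}.
\end{equation*}

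Finally, by Theorem~\ref{eigenvalue} the eigenvalues of $D_\mu$ are $k_i/c$ with multiplicities $d_i$, so $\operatorname{tr} D_\mu=\tfrac{1}{c}\sum_i k_id_i$ and $\operatorname{tr} D_\mu^2=\tfrac{1}{c^2}\sum_i k_i^2d_i$. The ratio $(\operatorname{tr} D_\mu)^2/\operatorname{tr} D_\mu^2$ is therefore scale-invariant: the factor $c$ cancels and we obtain exactly $(k_1d_1+\cdots+k_rd_r)^2/(k_1^2d_1+\cdots+k_r^2d_r)$, proving (ii). I do not expect a serious obstacle here: the argument is a direct bookkeeping exercise once Lemma~\ref{c} and Theorem~\ref{eigenvalue} are in hand. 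The only point that requires a moment of care is checking $\|\mu\|^2>0$, i.e.\ $n\operatorname{tr} D_\mu^2>(\operatorname{tr} D_\mu)^2$; this is the Cauchy--Schwarz inequality applied to the eigenvalue vector of $D_\mu$ against the all-ones vector, with equality excluded because $D_\mu$ is not a scalar (otherwise, being also a derivation, it would force $\mu=0$).
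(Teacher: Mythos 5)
Your proposal is correct and follows essentially the same route as the paper: both proofs reduce $F_n([\mu])$ to the scale-invariant ratio $(\operatorname{tr} D_\mu)^2/\operatorname{tr} D_\mu^2$ via Lemma~\ref{c} and Corollary~\ref{MD}, and then read off that ratio from the type data using Theorem~\ref{eigenvalue}. The only cosmetic difference is that the paper normalizes $\|\mu\|=1$ and extracts $F_n$ from a quadratic identity, whereas you keep $\|\mu\|^2$ explicit and solve for it; your added Cauchy--Schwarz check that the denominator is nonzero is a nice touch but not essential.
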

\begin{proof}
We suppose that
$\textnormal{M}_{\mu}=c_{\mu} I+D_{\mu}, \|\mu\|=1.$
Since $\operatorname{tr}\textnormal{M}_{\mu}=-2\langle\mu , \mu\rangle=-2,$ then
\begin{align*}
\operatorname{tr}\textnormal{M}_{\mu}^{2}=\operatorname{tr}\textnormal{M}_{\mu}(c_\mu I+D_\mu)=c_\mu \operatorname{tr}\textnormal{M}_{\mu}=-2c_\mu,
\end{align*}
and
$
F_n ([\mu])=\frac{\operatorname{tr}{\textnormal{M}_{\mu}}^{2}}{\|\mu\|^{4}}=\operatorname{tr}{\textnormal{M}_{\mu}}^{2}=-2c_\mu.
$

For $\textnormal{(i)}$, we have $D_{\mu}=0$, so $\textnormal{M}_{\mu}=c_{\mu} I$ and $c_\mu n=\operatorname{tr}\textnormal{M}_{\mu}=-2.$ Thus $c_\mu =-\frac{2}{n}.$
$F_n ([\mu])=-2c_\mu=\frac{4}{n}.$

For $\textnormal{(ii)}$, we have $D_{\mu}\ne0$, and $c_\mu=-\frac{\operatorname{tr}{D}_{\mu}^{2}}{\operatorname{tr}{D}_{\mu}}$ by Lemma~\ref{c} and Remark~\ref{D=0}.  Note that
\begin{align*}
F_n ([\mu])=\operatorname{tr}{\textnormal{M}_{\mu}}^{2}=\operatorname{tr}(c_{\mu}I+D_\mu)^{2}=c_{\mu}^{2}n+c_{\mu}\operatorname{tr}D_\mu
=\frac{1}{4}F_n ([\mu])^{2} n-\frac{1}{2}F_n ([\mu])\operatorname{tr}D_\mu,
\end{align*}
so  we have
\begin{align*}
\frac{1}{F_n ([\mu])}=\frac{1}{4}n-\frac{1}{2F_n  ([\mu])}\operatorname{tr}(D_\mu)=\frac{1}{4}n+\frac{1}{4c_\mu}\operatorname{tr}D_\mu=\frac{1}{4}\left(n-\frac{(\operatorname{tr}D_\mu)^{2}}{\operatorname{tr}D_\mu^{2}}\right).
\end{align*}
It follows that $F_n([\mu])=4\left(n-\frac{(k_1d_1+k_2d_2+\cdots+k_rd_r)^{2}}{k_1^{2}d_1+k_2^{2}d_2+\cdots+k_r^{2}d_r}\right)^{-1}.$
\end{proof}

\subsection{The  minima of $F_{n}: \mathcal{A}_{n} \rightarrow \mathbb{R}$}
\begin{Lemma}\label{4/n}
Assume $[\mu]\in\mathbb{P}V_n,$ then $[\mu]$ is a critical point of $F_n:\mathbb{P}V_n\rightarrow\mathbb{R}$ with type $(0;n)$ if and only if $F_n([\mu])=\frac{4}{n}.$ Moreover, $\frac{4}{n}$ is the  minimum  value of $F_n:\mathbb{P}V_n\rightarrow\mathbb{R}.$
\end{Lemma}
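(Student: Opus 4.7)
The plan is to obtain the bound $F_n([\mu])\ge 4/n$ from a Cauchy--Schwarz estimate on the Hermitian inner product $(A,B)=\operatorname{tr} AB^*$, and then to identify the equality case with critical points of type $(0;n)$ via Theorem~\ref{MID}.

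First I would normalize by taking $\|\mu\|=1$, which by Lemma~\ref{Mm} and the formula for $F_n$ in Theorem~\ref{MID} gives $F_n([\mu])=\operatorname{tr}\textnormal{M}_\mu^2$ and $\operatorname{tr}\textnormal{M}_\mu=(\textnormal{M}_\mu,I)=2\langle\mu,I.\mu\rangle=-2$, exactly as computed in Lemma~\ref{c} and Proposition~\ref{CV}. Applying the Cauchy--Schwarz inequality to the Hermitian transformations $\textnormal{M}_\mu$ and $I$ with respect to the inner product $(A,B)=\operatorname{tr}AB^*$, I get
\begin{equation*}
4=(\operatorname{tr}\textnormal{M}_\mu)^2=(\textnormal{M}_\mu,I)^2\le (\textnormal{M}_\mu,\textnormal{M}_\mu)\cdot(I,I)=\operatorname{tr}\textnormal{M}_\mu^2\cdot n,
\end{equation*}
which yields $F_n([\mu])=\operatorname{tr}\textnormal{M}_\mu^2\ge 4/n$. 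This proves the minimum claim.

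Next I would characterize the equality case. Cauchy--Schwarz holds with equality if and only if $\textnormal{M}_\mu$ is a real scalar multiple of $I$; combining with $\operatorname{tr}\textnormal{M}_\mu=-2$ forces $\textnormal{M}_\mu=-\tfrac{2}{n}I$. In particular $\textnormal{M}_\mu=c_\mu I+D_\mu$ with $c_\mu=-2/n\in\mathbb{R}$ and $D_\mu=0\in\textnormal{Der}(\mu)$, so by Theorem~\ref{MID}(iii) the point $[\mu]$ is critical; since $D_\mu=0$, its type is $(0;n)$ by definition.

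For the converse direction, if $[\mu]$ is a critical point of type $(0;n)$, then $D_\mu=0$ and $\textnormal{M}_\mu=c_\mu I$, so Proposition~\ref{CV}(i) (or a direct computation using $c_\mu n=\operatorname{tr}\textnormal{M}_\mu=-2\|\mu\|^2$) gives $F_n([\mu])=4/n$. Conversely, if $F_n([\mu])=4/n$ then equality holds in the Cauchy--Schwarz estimate above, and the argument of the previous paragraph shows $[\mu]$ is critical of type $(0;n)$. I do not foresee a real obstacle: the only nontrivial input is the Cauchy--Schwarz inequality on $\mathfrak{gl}(n)$, and all remaining identifications are formal consequences of the earlier results (Lemma~\ref{Mm}, Theorem~\ref{MID}, Proposition~\ref{CV}).
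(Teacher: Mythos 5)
Your proposal is correct and follows essentially the same route as the paper: the paper writes $F_n([\mu])=4\sum x_i^2/(\sum x_i)^2$ in terms of the eigenvalues of $\textnormal{M}_\mu$ and invokes the elementary inequality $\sum x_i^2\ge\frac{1}{n}(\sum x_i)^2$ with equality iff all $x_i$ coincide, which is exactly your Cauchy--Schwarz estimate $(\textnormal{M}_\mu,I)^2\le n\operatorname{tr}\textnormal{M}_\mu^2$ in eigenvalue form. The identification of the equality case with $\textnormal{M}_\mu$ being a scalar multiple of $I$, hence a critical point of type $(0;n)$ via Theorem~\ref{MID}(iii), matches the paper's argument.
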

\begin{proof}
For any $0\ne\mu\in V_n,$ we use $x_1,x_2,\cdots,x_n\in \mathbb{R}$ denote  the eigenvalues of $\textnormal{M}_{\mu}.$
Note that $\operatorname{tr}\textnormal{M}_{\mu}=-2\|\mu\|^{2}$, then we have
\begin{align*}
F_n ([\mu])=\frac{\operatorname{tr}{\textnormal{M}_{\mu}}^{2}}{\|\mu\|^{4}}=4\frac{\operatorname{tr}{\textnormal{M}_{\mu}}^{2}}{(\operatorname{tr}{\textnormal{M}_{\mu}})^{2}}
=4\frac{(x_1^{2}+x_2^{2}+\cdots +x_n^{2})}{(x_1+x_2+\cdots +x_n)^{2}}.
\end{align*}
It is easy  to see that $F_n ([\mu])\geq \frac{4}{n}$ with equality holds if and only if $x_1=x_2=\cdots=x_n.$ So $[\mu]$ is a critical point of $F_n:\mathbb{P}V_n\rightarrow\mathbb{R}$ with type $(0;n)$ if only if $\textnormal{M}_{\mu}$ is a constant multiple of $I$,  if and only $F_n$ attains its minimum value $\frac{4}{n}$ at $[\mu].$
\end{proof}

\begin{theorem}\label{x}
The functional  $F_{n}: \mathcal{A}_{n} \rightarrow \mathbb{R}$ attains its minimum value at a point $[\lambda] \in \textnormal{GL}(n).[\mu]$ if and only if $\mu$ is a semisimple associative algebra. In such a case, $F_{n}([\lambda])=\frac{4}{n}.$
\end{theorem}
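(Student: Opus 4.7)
My plan is to leverage Lemma~\ref{4/n}, which pinpoints $\tfrac{4}{n}$ as the global minimum of $F_n$ on $\mathbb{P}V_n$ and characterises the minimisers as critical points of type $(0;n)$, i.e.\ those with $\textnormal{M}_\mu$ a (necessarily negative) scalar multiple of $I$. The theorem thus reduces to showing that some $[\lambda]\in\textnormal{GL}(n).[\mu]$ has $\textnormal{M}_\lambda$ scalar if and only if $\mu$ is semisimple.

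For the ``if'' direction the key local calculation is that $\mathbb{M}_k(\mathbb{C})$, realised on $\mathbb{C}^{k^2}$ with the standard matrix units $\{E_{ab}\}$ as orthonormal basis, satisfies $\textnormal{M}_{\mu_k}=-2k\, I_{k^2}$. This follows from (\ref{Mformula}) applied to $\mu_k(E_{ab},E_{cd})=\delta_{bc}E_{ad}$: tested against $E_{pq}$ and $E_{rs}$, each of the three terms evaluates to $2k\,\delta_{pr}\delta_{qs}$, giving $-2k\delta_{pr}\delta_{qs}$ after the signed combination. Given $\mu$ semisimple, Theorem~\ref{semidecom} provides an element of $\textnormal{GL}(n)$ mapping $\mu$ to $\mu_0:=\mu_{n_1}\oplus\cdots\oplus\mu_{n_s}$ with $n=n_1^2+\cdots+n_s^2$; mutual annihilation and orthogonality of the summands makes $\textnormal{M}_{\mu_0}$ block-diagonal with $i$-th block $-2n_i\,I$. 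I would then apply $g\in\textnormal{GL}(n)$ equal to $\sqrt{n_i}\,I$ on the $i$-th block: by (\ref{Gaction}), $g.\mu_0$ has its $i$-th summand rescaled to $n_i^{-1/2}\mu_{n_i}$, and by the quadratic scaling rule $\textnormal{M}_{t\nu}=|t|^2\textnormal{M}_\nu$ each block of $\textnormal{M}_{g.\mu_0}$ becomes $-2\,I$. Thus $[g.\mu_0]\in \textnormal{GL}(n).[\mu]$ is a critical point of type $(0;n)$ and $F_n([g.\mu_0])=\tfrac{4}{n}$ by Lemma~\ref{4/n}. (This is morally Remark~\ref{crisum} executed with matched constants.)

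For the ``only if'' direction, semisimple algebras exist in every dimension (e.g.\ $\mathbb{C}^n$ with componentwise multiplication), so by the previous paragraph the minimum of $F_n|_{\mathcal{A}_n}$ equals $\tfrac{4}{n}$. If this minimum is attained at $[\lambda]\in\textnormal{GL}(n).[\mu]$, then $F_n([\lambda])=\tfrac{4}{n}$ and Lemma~\ref{4/n} forces $\textnormal{M}_\lambda=c_\lambda I$ with $c_\lambda<0$; in particular every eigenvalue of $\textnormal{M}_\lambda$ is negative, and the associative-algebra analogue of Lauret's negative-eigenvalue criterion (Remark~\ref{negativeDefinite}) concludes that $\lambda$, hence $\mu$, is semisimple.

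The main obstacle is the identity $\textnormal{M}_{\mu_k}=-2k\, I_{k^2}$: formula (\ref{Mformula}) has three separate terms, each of which has to be summed out against the matrix-unit product rule, and one must see that the cancellations produce a genuine scalar rather than a mere diagonal. The rest of the argument is clean: Lemma~\ref{4/n} supplies the global infimum and characterises its minimisers, the block-scaling is elementary orbit manipulation, and the converse direction rides on the negative-eigenvalue criterion.
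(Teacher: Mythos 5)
Your ``if'' direction is sound and essentially the paper's: the computation $\textnormal{M}_{\mathbb{M}_k(\mathbb{C})}=-2k\,I_{k^2}$ checks out (each of the three terms in (\ref{Mformula}) does sum to $2k\,\delta_{pr}\delta_{qs}$ against matrix units), and your explicit block rescaling is just Remark~\ref{crisum} with the constants written out. Likewise, your reduction of the ``only if'' direction to the claim that $\textnormal{M}_\lambda$ is a negative scalar multiple of $I$ at a minimiser agrees with the paper.

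The gap is in how you finish the ``only if'' direction: you invoke Remark~\ref{negativeDefinite} to pass from ``$\textnormal{M}_\lambda$ negative definite'' to ``$\lambda$ semisimple,'' but that remark is not an independent result --- the paper states it as a byproduct \emph{of the proof of Theorem~\ref{x}} (``by the proof of Theorem~\ref{x}, we know that\dots''), so citing it here is circular. Lauret's Lie-algebra version does not transfer for free either; the paper explicitly presents the associative analogue as something it must prove. The missing argument is the substantive half of the theorem: take the orthogonal decomposition $\mathbb{C}^n=\mathcal{H}\oplus\mathcal{V}\oplus\mathcal{Z}$, where $\mathcal{N}=\mathcal{V}\oplus\mathcal{Z}$ is the radical of $\lambda$ and $\mathcal{Z}$ is the annihilator of $\mathcal{N}$ (an ideal of $\lambda$); if $\mathcal{Z}\neq 0$, expanding $\langle\textnormal{M}_\lambda Z,Z\rangle$ via (\ref{Mformula}) for $Z\in\mathcal{Z}$ and summing over an orthonormal basis of $\mathcal{Z}$, the negative terms cancel against part of the positive ones and one is left with $\sum_k\langle\textnormal{M}_\lambda Z_k,Z_k\rangle\geq 0$, contradicting $\textnormal{M}_\lambda=c_\lambda I$ with $c_\lambda<0$. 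Hence $\mathcal{Z}=0$, so $\mathcal{N}=0$ and $\lambda$ is semisimple. Without this computation (or an equivalent substitute) your proof of the ``only if'' direction is incomplete.
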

\begin{proof}
Consider the simple associative  algebra $\mathbb{M}_{m}(\mathbb{C})$ for an integer $m>0.$  We endow $\mathbb{M}_{m}(\mathbb{C})$ with the following Hermitian inner product
\begin{align}\label{product}
\langle A,B\rangle:=\operatorname{tr}AB^{*},~ A,B\in \mathbb{M}_{m}(\mathbb{C}).
\end{align}
Then  $\{E_{ij}:1\leq i,j\leq m\}$ is an orthonormal basis,  where $E_{ij}$ denote the matrices having $1$ in the $(i,j)$-position and $0$ elsewhere. Set $\nu:=(\mathbb{M}_{m}(\mathbb{C}),\langle \cdot,\cdot\rangle)$. Clearly
$$
(L_A^\nu)^*=L_{A^*}, \quad (R_A^\nu)^*=R_{A^*}
$$
for any $ A\in \mathbb{M}_{m}(\mathbb{C}).$ Thus by (\ref{M}), we have
\begin{align*}
\textnormal{M}_{\nu}
&=2\sum_{ij}{L}^{\nu}_{E_{ij}}({L}^{\nu}_{E_{ij}})^{*}-2\sum_{ij}({L}^{\nu}_{E_{ij}})^{*}{L}^{\nu}_{E_{ij}}-2\sum_{ij}({R}^{\nu}_{E_{ij}})^{*}{R}^{\nu}_{E_{ij}}\\
&=2\sum_{ij}{L}^{\nu}_{E_{ij}}{L}^{\nu}_{E_{ji}}-2\sum_{ij}{L}^{\nu}_{E_{ji}}{L}^{\nu}_{E_{ij}}-2\sum_{ij}{R}^{\nu}_{E_{ji}}{R}^{\nu}_{E_{ij}}\\
&=2\sum_{ij}{L}^{\nu}_{E_{ij}E_{ji}}-2\sum_{ij}{L}^{\nu}_{E_{ji}E_{ij}}-2\sum_{ij}{R}^{\nu}_{E_{ij}E_{ji}}\\
&=2m\sum_{i}{L}^{\nu}_{E_{ii}}-2m\sum_{i}{L}^{\nu}_{E_{ii}}-2m\sum_{i}{R}^{\nu}_{E_{ii}}\\
&=2m{L}^{\nu}_{I}-2m{L}^{\nu}_{I}-2m{R}^{\nu}_{I}\\
&=2mI_{m^2}-2mI_{m^2}-2mI_{m^2}\\
&=-2mI_{m^2}.
\end{align*}
So $[\nu]$ is a critical point of type
$(0;m^2).$ Since $\mu$ is  a complex semisimple associative  algebra,   by Theorem~\ref{semidecom}, $\mu$ is isomorphic to
$\mathbb{M}_{n_1}(\mathbb{C})\times \mathbb{M}_{n_2}(\mathbb{C})\times\cdots\times \mathbb{M}_{n_s}(\mathbb{C})$ for some positive integers $n_1,n_2,\cdots,n_s.$ It follows from  Remark~\ref{crisum} that there exists  a point $[\lambda] \in \textnormal{GL}(n).[\mu]$  such that $[\lambda]$ is a critical point of type
$(0;n)$.  So  the functional $F_{n}: \mathcal{A}_{n} \rightarrow \mathbb{R}$ attains its minimum value at $[\lambda]$, and  $F_{n}([\lambda])=\frac{4}{n}$ by  Lemma~\ref{4/n}.

Conversely, assume that $F_{n}: \mathcal{A}_{n} \rightarrow \mathbb{R}$ attains its minimum value at a point $[\lambda] \in \textnormal{GL}(n).[\mu].$  The first part of the proof implies that $\textnormal{M}_\lambda=c_\lambda I$ with $c_\lambda<0.$
To prove $\mu$ is semisimple, it suffices to show   that $\mathcal{L}=(\lambda,\mathbb{C}^{n})$ is semisimple. Consider the following   orthogonal decompositions: (i) $\mathcal{L}=\mathcal{H}\oplus \mathcal{N}$, where $\mathcal{N}$ is the radical of $\lambda;$    (ii) $\mathcal{N}=\mathcal{V}\oplus\mathcal{Z}$, where  $\mathcal{Z}=\{A\in \mathcal{N}:\lambda(A,\mathcal{N})=\lambda(\mathcal{N},A)=0\}$ is the annihilator of $\mathcal{N}$. Clearly, $\mathcal{Z}$ is an   ideal of $\mathcal{L}$.
We have $\mathcal{L}=\mathcal{H}\oplus\mathcal{V}\oplus\mathcal{Z}.$
Suppose that $\mathcal{Z}\ne0.$ Let $\{H_i\},\{V_i\},\{Z_i\}$  be an orthonormal basis of $\mathcal{H},\mathcal{V},$ and $\mathcal{Z},$ respectively. Put $\{X_i\}=\{H_i\}\cup\{V_i\}\cup\{Z_i\}.$ For any $0\ne Z\in \mathcal{Z}$, by hypothesis we have
\begin{align*}
0>\langle\textnormal{M}_{\lambda} Z, Z\rangle
=&2 \sum_{i j}|\langle\lambda(X_{i}, X_{j}), Z\rangle|^{2}-2\sum_{ij}|\langle\lambda(Z, X_{i}), X_{j}\rangle|^{2}
-2\sum_{ij}|\langle\lambda(X_{i},Z), X_{j}\rangle|^{2} \\
=&2 \sum_{ij}\left\{|\langle\lambda(Z_i, H_{j}), Z\rangle|^{2}+|\langle\lambda(H_{i},Z_j), Z\rangle|^{2}
\right\}+\alpha{(Z)}\\
&-2\sum_{ij}|\langle\lambda(Z, H_{i}), Z_{j}\rangle|^{2}
-2\sum_{ij}|\langle\lambda(H_{i},Z), Z_{j}\rangle|^{2},
\end{align*}
where $\alpha{(Z)}=2\sum_{ij}|\langle\lambda(Y_{i}, Y_{j}), Z\rangle|^{2}\geq 0,$ $\{Y_i\}=\{H_i\}\cup\{V_i\}.$ This implies
\begin{align*}
0>\sum_{k}\langle\textnormal{M}_{\lambda} Z_k, Z_k\rangle=\sum_{k}\alpha{(Z_k)}\geq 0,
\end{align*}
which is a contradiction. So $\mathcal{Z}=0$,  and consequently, $\mathcal{N}=0.$  Therefore $\mathcal{L}$ is a  semisimple  associative algebra.

This completes the proof of theorem.
\end{proof}


\begin{remark}\label{negativeDefinite}
In fact, by the proof of Theorem~\ref{x}, we know that if  $[\mu]\in \mathcal{A}_n$ for which
there exists  $[\lambda] \in \textnormal{GL}(n).[\mu]$ such that $\textnormal{M}_{\lambda}$ is  negative definite, then $\mu $ is a semisimple  associative algebra.
\end{remark}

\subsection{The    maxima  of $F_{n}: \mathcal{A}_{n} \rightarrow \mathbb{R}$}
We say that an algebra $\lambda$ degenerates to $\mu$, write as $\lambda\rightarrow \mu$  if $\mu\in\overline{\textnormal{GL}(n).\lambda}$, the closure of $\textnormal{GL}(n).\lambda$ with respect to the usual topology of $V_n$. The degeneration $\lambda\rightarrow \mu$ is called \textit{direct  degeneration} if there are no nontrivial chains: $\lambda\rightarrow\nu\rightarrow \mu.$ The \textit{degeneration level} of an algebra is the maximum length of chain of  direct  degenerations.

\begin{theorem}[\cite{KO2013}]
An $n$-dimensional associative algebra is of degeneration level one if and only if  it is isomorphic to one of the following
\begin{enumerate}
\item [(1)]$\mu_{l}$:  $\mu_{l}(X_1,X_i)=X_i,~ i=1,\cdots,n;$
\item [(2)]$\mu_{r}$:  $\mu_{r}(X_i,X_1)=X_i,~ i=1,\cdots,n;$
\item [(3)]$\mu_{ca}$:  $\mu_{s}(X_1,X_1)=X_2$,
\end{enumerate}
where $\{X_1,\cdots,X_n\}$ is a basis.
\end{theorem}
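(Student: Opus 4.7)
The plan is to establish both directions of the equivalence. The convenient reformulation, via the definition of direct degeneration and degeneration level, is that an algebra $\mu$ has degeneration level one precisely when $\overline{\textnormal{GL}(n).\mu}=\textnormal{GL}(n).\mu\,\cup\,\{0\}$; that is, the only algebra that $\mu$ properly degenerates to (up to isomorphism) is the zero algebra. That $0\in\overline{\textnormal{GL}(n).\mu}$ for every nonzero $\mu$ is already noted at the beginning of Section~\ref{section3}, via $g_t=tI$ and $(tI).\mu=t^{-1}\mu\to 0$. So the content is really that no intermediate orbit appears in the closure, and the theorem becomes a rigidity-type classification.

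For the backward direction, I would verify each of $\mu_l$, $\mu_r$, $\mu_{ca}$ separately. For each candidate I would exhibit characteristic subspaces preserved under degeneration that pin down the nonzero limits uniquely: for $\mu_l$, the left annihilator $\textnormal{lann}(\mu)$ has codimension one and every vector outside it acts as a left identity; for $\mu_r$, the symmetric statement with right multiplication; for $\mu_{ca}$, the algebra is two-step nilpotent with $\textnormal{Im}(\mu)=\textnormal{ann}(\mu)$ one-dimensional. The stabilizer in $\textnormal{GL}(n)$ is then computable directly (the distinguished one-sided identity, respectively the filtration by annihilator, forces block-type automorphisms), yielding the orbit dimension. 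Upper semicontinuity of orbit dimension, combined with the fact that these structural invariants persist in nonzero limits, forces any nonzero point of $\overline{\textnormal{GL}(n).\mu}$ to lie in the orbit of $\mu$ itself.

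For the forward direction, let $\mu$ be of level one. I would rule out structural possibilities case by case. First, $\mu$ cannot be semisimple for $n\geq 2$: a semisimple associative algebra $\mathbb{M}_{n_1}(\mathbb{C})\times\cdots\times\mathbb{M}_{n_s}(\mathbb{C})$ always admits an explicit weight-scaling $g_t=\textnormal{diag}(t^{a_1},\ldots,t^{a_n})$ in the matrix-unit basis whose limit is a proper non-semisimple (mixed or nilpotent) associative algebra, producing a chain of direct degenerations of length $\geq 2$. So $N(\mu)\ne 0$. Next, I would argue that if the semisimple quotient $\mu/N(\mu)$ had dimension $\geq 2$, or if $N(\mu)$ had any product structure beyond that of $\mu_{ca}$, then an explicit one-parameter family of diagonal contractions produces a nontrivial intermediate associative limit, contradicting level one. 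The remaining casework — carried out by analyzing $\dim N(\mu)$, $\dim \mu/N(\mu)$, and the action of the semisimple quotient on $N(\mu)$ — forces $\mu$ into exactly one of the three listed configurations: a left-identity extension of the trivial algebra ($\mu_l$), its right version ($\mu_r$), or the two-step nilpotent model $\mu_{ca}$.

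The main obstacle is the systematic exclusion of intermediate degenerations in the forward direction. A useful tool is the Hilbert--Mumford criterion in its moment-map form: any limit $\lim_{t\to\infty}e^{tA}.\mu$ with $A\in\textnormal{i}\mathfrak{u}(n)$ is obtained by deleting the strictly-lower-weight components of $\mu$ in the eigenbasis of $A$, and the closed orbits in $\overline{\textnormal{GL}(n).\mu}$ correspond to critical points of $F_n$ in the closure by Theorem~\ref{MID} and Theorem~\ref{min_cri}. Enumerating these weighted contractions reduces the problem to finite bookkeeping in each low dimension, matched against the classification of associative algebras. The delicate step is to verify that the chains produced consist of \emph{direct} degenerations rather than ones refinable through intermediate associative orbits, which is precisely where the finer structural invariants (annihilators and one-sided identities) re-enter to guarantee that no such refinement exists.
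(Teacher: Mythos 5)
You cannot really be compared against ``the paper's own proof'' here: the paper does not prove this statement at all, it imports it verbatim from the cited reference \cite{KO2013}. So your proposal has to stand on its own as a proof, and as written it does not. Your reformulation of level one as $\overline{\textnormal{GL}(n).\mu}=\textnormal{GL}(n).\mu\cup\{0\}$ is correct and is a reasonable starting point, and the backward direction is plausibly completable along the lines you sketch. But the forward direction --- which is the entire content of the theorem --- is asserted rather than proved: the sentence ``the remaining casework \dots\ forces $\mu$ into exactly one of the three listed configurations'' is a restatement of the conclusion. Nothing in the proposal explains why, say, an arbitrary nilpotent associative algebra other than $\mu_{ca}$, or an arbitrary algebra with one-dimensional semisimple part acting on a nontrivial radical other than as in $\mu_l$ or $\mu_r$, must admit a nonzero proper degeneration. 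That exclusion argument is exactly where the work lies, and it is missing.

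There is also a technical flaw in the tool you propose for carrying out that exclusion. Limits $\lim_{t\to\infty}e^{tA}.\mu$ along one-parameter subgroups do \emph{not} exhaust the boundary of $\textnormal{GL}(n).\mu$: the Hilbert--Mumford/Kempf theory only guarantees that the unique \emph{closed} orbit in $\overline{\textnormal{GL}(n).\mu}$ is reachable by a one-parameter subgroup, while intermediate (non-closed) boundary orbits in general are not. Since the whole point of the forward direction is to detect intermediate orbits, ``enumerating weighted contractions'' cannot be reduced to finite bookkeeping in the way you claim, and conversely, in the backward direction the absence of 1-PSG limits would not by itself show the absence of intermediate degenerations. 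Finally, a small but symptomatic error: for $\mu_{ca}$ with $n\ge 3$ one has $\textnormal{ann}(\mu_{ca})=\textnormal{span}\{X_2,\dots,X_n\}$ of dimension $n-1$, while $\textnormal{Im}(\mu_{ca})=\mathbb{C}X_2$, so the invariant ``$\textnormal{Im}(\mu)=\textnormal{ann}(\mu)$ one-dimensional'' that you propose to use to pin down the limits of $\mu_{ca}$ is simply false in that range. If you want an actual proof you should follow (or reconstruct) the argument of \cite{KO2013}, which carries out the structural casework you have only gestured at.
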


\begin{theorem}\label{mx}
The functional $F_{n}: \mathcal{A}_{n} \rightarrow \mathbb{R}$ attains its maximal value at a point $[\mu]\in L_n,$ $n\ge3$ if and only if $\mu$ is isomorphic to the
commutative associative   algebra $\mu_{ca}$. In such a case, $F_{n}([\mu])=20.$
\end{theorem}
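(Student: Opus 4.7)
The plan has three steps: evaluate $F_n([\mu_{ca}])$ directly, reduce a general maximizer to the degeneration-level-one case, and then dispose of the three resulting cases.

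For the first step, the only non-vanishing structure constant of $\mu_{ca}$ is $\mu_{ca}(X_1,X_1)=X_2$, so the only non-zero operators entering formula (\ref{M}) are $L_{X_1}^{\mu_{ca}}=R_{X_1}^{\mu_{ca}}=E_{21}$, yielding $\textnormal{M}_{\mu_{ca}}=2E_{22}-4E_{11}=\operatorname{diag}(-4,2,0,\dots,0)$ and $\|\mu_{ca}\|^{2}=1$; hence $F_n([\mu_{ca}])=\operatorname{tr}(\textnormal{M}_{\mu_{ca}}^{2})=20$. The decomposition $\textnormal{M}_{\mu_{ca}}=-10\,I+\operatorname{diag}(6,12,10,\dots,10)$, with the second summand a derivation of $\mu_{ca}$, also confirms that $[\mu_{ca}]$ is a critical point of type $(3<5<6;1,n-2,1)$, consistent with Proposition~\ref{CV}.

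For the second step, let $[\mu]\in\mathcal{A}_n$ attain the maximum. Since $[\mu]$ is in particular a maximum of $F_n|_{\operatorname{GL}(n).[\mu]}$ and $\operatorname{grad}(F_n)_{[\mu]}$ is tangent to this orbit by Theorem~\ref{MID}, $[\mu]$ is a critical point of $F_n$. I claim $\mu$ is of degeneration level one. Otherwise, there is a non-trivial degeneration $\mu\to\lambda$ with $0\ne\lambda\not\cong\mu$, so $[\lambda]\in(\overline{\operatorname{GL}(n).[\mu]}\setminus\operatorname{GL}(n).[\mu])\cap\mathcal{A}_n$. Theorem~\ref{min_cri}(i) combined with continuity of $F_n$ then forces $F_n([\lambda])\geq F_n([\mu])$, and maximality upgrades this to equality. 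However, in the Kirwan-Ness stratification of $\mathbb{P}V_n$, points of the orbit-closure boundary of a critical $\operatorname{GL}(n)$-orbit sit in strictly deeper instability strata $S_{\beta'}$ on which $F_n\geq|\beta'|^{2}>|\beta|^{2}=F_n([\mu])$, contradicting the equality.

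By the classification from \cite{KO2013} recalled just above, $\mu$ is then isomorphic to one of $\mu_l$, $\mu_r$, or $\mu_{ca}$. The same matrix recipe gives $L_{X_1}^{\mu_l}=I$, $L_{X_i}^{\mu_l}=0$ for $i\geq 2$, and $R_{X_i}^{\mu_l}=E_{i,1}$, so $\textnormal{M}_{\mu_l}=-2n\,E_{11}$ and $\|\mu_l\|^{2}=n$, whence $F_n([\mu_l])=4$; by left--right symmetry $F_n([\mu_r])=4$ as well. Since $n\geq 3$ we have $4<20$, so the maximum on $\mathcal{A}_n$ equals $20$ and is attained exactly on the $\operatorname{GL}(n)$-orbit of $\mu_{ca}$.

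The main obstacle is the strict increase $F_n([\lambda])>F_n([\mu])$ used in the level-one reduction; this is the only non-elementary input and rests on the Kirwan-Ness stratification. A more self-contained alternative would be to Taylor-expand $F_n$ along a one-parameter subgroup $\exp(tX)$, $X\in\textnormal{i}\mathfrak{u}(n)$, realizing the degeneration, and to show that the first non-zero derivative at the critical point $[\mu]$ is strictly positive whenever $X\notin\operatorname{Der}(\mu)$; all remaining ingredients --- the two $\textnormal{M}$-computations, the identification of a maximizer as a critical point, and the use of the level-one classification --- are direct consequences of the material in Sections~\ref{section3}--\ref{section4}.
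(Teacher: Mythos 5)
Your computations are correct and your overall architecture --- maximizer is a critical point, reduce to degeneration level one, invoke the classification of \cite{KO2013}, and compare the values $4$ and $20$ --- is exactly the paper's. The direct evaluations of $\textnormal{M}_{\mu_{ca}}=2E_{22}-4E_{11}$ and $\textnormal{M}_{\mu_l}=-2nE_{11}$ are right and are an acceptable substitute for the paper's route through the critical types $(3<5<6;1,n-2,1)$ and $(0<1;1,n-1)$ together with Proposition~\ref{CV}.

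The one step where you diverge is the level-one reduction, and that is where your argument has a genuine gap. After correctly deducing $F_n([\lambda])=F_n([\mu])$ for a boundary point $[\lambda]\in\overline{\textnormal{GL}(n).[\mu]}\setminus\textnormal{GL}(n).[\mu]$, you rule this out by asserting that such boundary points lie in strictly deeper Kirwan--Ness strata with $F_n>F_n([\mu])$. That assertion (essentially: the orbit of a critical point is closed in its stratum) is a nontrivial theorem of the stratification theory which is neither proved nor cited in the paper, so as written the contradiction is not established. The paper avoids this entirely: since $[\mu]$ is a global maximum on $\mathcal{A}_n\supseteq\textnormal{GL}(n).[\mu]$ and, by Theorem~\ref{min_cri}(i), also the minimum of $F_n$ on its orbit, $F_n$ is \emph{constant} on $\textnormal{GL}(n).[\mu]$; hence every point of the orbit is a critical point of $F_n|_{\textnormal{GL}(n).[\mu]}$, hence of $F_n$ by Theorem~\ref{MID} (i)$\Leftrightarrow$(ii), and Theorem~\ref{min_cri}(ii) then forces $\textnormal{GL}(n).[\mu]=\textnormal{U}(n).[\mu]$. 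The orbit is therefore compact, so its closure in $V_n$ adds only $0$, i.e.\ $\mu$ has degeneration level one. You should replace the stratification appeal with this argument (or supply a precise reference and proof of the strict-increase property); the rest of your proof then goes through.
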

\begin{proof}
Assume that $F_{n}: \mathcal{A}_{n} \rightarrow \mathbb{R}$ attains its maximal value at a point $[\mu]\in \mathcal{A}_n,$  $n\ge3.$ By Theorem~\ref{MID}, we know that $[\mu]$ is also a critical of   $F_{n}: \mathbb{P}V_{n} \rightarrow \mathbb{R}.$ Then it follows  Theorem~\ref{min_cri} that $F_{n}|_{\textnormal{GL}(n).[\mu]}$ also attains its minimum value at a point $[\mu]$ , consequently   $F_{n}|_{\textnormal{GL}.[\mu]}$ is a constant, so
\begin{align}\label{GLU}
\textnormal{GL}(n).[\mu]=\textnormal{U}(n).[\mu]
\end{align}
The  relation (\ref{GLU}) implies that  the only non-trivial degeneration  of $\mu$ is $0$ (\cite[Theorem 5.1]{Lauret2003} ), consequently the  degeneration level of $\mu$ is $1$.

It is easy to see that  the critical points   $[\mu_{l}]$, $[\mu_{r}]$ are both  of type $(0<1;1,n-1)$, and  $[\mu_{ca}]$ is of type $(3<5<6;1,n-2,1).$
By Proposition~\ref{CV}, we know
\begin{align*}
F_{n}([\mu_{ca}])=20>4=F_{n}([\mu_{l}])=F_{n}([\mu_{r}]).
\end{align*}
So the theorem is proved.
\end{proof}

\subsection{The structure for the critical points of $F_{n}: \mathcal{A}_{n} \rightarrow \mathbb{R}$}   In the following,   we discuss  the structure for an arbitrary critical points of $F_{n}: \mathcal{A}_{n} \rightarrow \mathbb{R}$ by   Theorem~\ref{eigenvalue}.
\begin{theorem}\label{structure}
Let $[\mu]$ be a critical point of $F_{n}: \mathcal{A}_{n} \rightarrow \mathbb{R}$  with $\textnormal{M}_\mu=c_\mu I+D_\mu$  of type $(k_1<\cdots<k_r;d_1,d_2,\cdots,d_r)$,  where $c_{\mu} \in \mathbb{R}$ and $D_{\mu} \in \textnormal{Der}(\mu)$. Consider the orthogonal decomposition
$$\mathbb{C}^n=\mathfrak{A}_{-}\oplus\mathfrak{A}_0\oplus\mathfrak{A}_{+},$$
where $\mathfrak{A}_{-},$ $\mathfrak{A}_0$ and $\mathfrak{A}_{+}$ denote the direct sum of eigenspaces of $D_\mu$  with eigenvalues  smaller than zero, equal to zero  and larger than zero, respectively. Then the following conditions hold:
\begin{enumerate}
\item [(i)] $\textnormal{ann}(\mu)\subset\mathfrak{A}_+,$ where  $\textnormal{ann}(\mu)$ is the annihilator of $\mu$
\item [(ii)] $\mathfrak{A}_+\subset N(\mu),$ where  $N(\mu)$ is the radical of $\mu$.
\item [(iii)] $\mathfrak{A}_{-}\subset (C(\mu)\cap N(\mu))\setminus\textnormal{ann}(\mu),$  where $C(\mu)$ is the center  of $\mu$.
\item [(iv)] $(L_A^{\mu}-R_A^{\mu})^{*}\in \textnormal{Der}(\mu)$ for any $A\in \mathfrak{A}_0$.
So the  induced Lie algebra of $\mathfrak{A}_0$ is reductive.
\end{enumerate}
\end{theorem}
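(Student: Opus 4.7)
The plan is to exploit the $D_\mu$-grading $\mathbb{C}^n=\bigoplus_\lambda V_\lambda$ of $\mu$ (the derivation property of $D_\mu$ gives $\mu(V_\lambda,V_\sigma)\subset V_{\lambda+\sigma}$, or zero if $\lambda+\sigma$ is not an eigenvalue), combined with the sign information $c_\mu<0$ from Lemma~\ref{c} and the explicit formula (\ref{Mformula}). Each of the four items follows from a different pairing of these with Corollary~\ref{MD}. For (i), I first note that $\textnormal{ann}(\mu)$ is $D_\mu$-invariant: applying $D_\mu$ to $\mu(X,Y)=\mu(Y,X)=0$ for $X\in\textnormal{ann}(\mu)$ gives $\mu(D_\mu X,Y)=\mu(Y,D_\mu X)=0$. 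Decomposing into $D_\mu$-eigenvectors, fix $X\in V_\lambda\cap\textnormal{ann}(\mu)$; then $L_X^\mu=R_X^\mu=0$, so (\ref{Mformula}) collapses to $\langle\textnormal{M}_\mu X,X\rangle=2\sum_{i,j}|\langle\mu(X_i,X_j),X\rangle|^2\geq 0$, while the eigenvector relation gives $(c_\mu+\lambda)\|X\|^2$. With $c_\mu<0$ this forces $\lambda\geq -c_\mu>0$, so $X\in\mathfrak{A}_+$.

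For (ii) and (iii), the grading shows $\mathfrak{A}_+$ is a subalgebra whose $D_\mu$-weights are bounded below by $\lambda_{\min}^+>0$; a $k$-fold product of eigenvectors in $\mathfrak{A}_+$ has weight at least $k\lambda_{\min}^+$, exceeding $\lambda_{\max}$ for $k$ large, so $\mathfrak{A}_+$ is a nilpotent subalgebra. The remaining step for (ii) is to show $\mathfrak{A}_+$ is a two-sided ideal, which reduces to $\mu(V_\lambda,V_\sigma)=\mu(V_\sigma,V_\lambda)=0$ whenever $\lambda>0$ and $\lambda+\sigma\leq 0$; the plan is to squeeze this out of Corollary~\ref{MD} applied to endomorphisms supported on the relevant eigenspace blocks, together with the critical-point equation. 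Granted (ii), for $X\in V_\lambda$ with $\lambda<0$ and $Y=\sum Y_\sigma$ arbitrary, the positive-weight components force $\mu(X,Y_\sigma)\in N(\mu)$ (an ideal by (ii)), while the non-positive-weight components have strictly negative target weight whose iterated self-products eventually fall below $\lambda_{\min}$ and vanish; Remark~\ref{radical} then yields $X\in N(\mu)$. Commutativity $X\in C(\mu)$ follows by comparing the strict negativity $\langle\textnormal{M}_\mu X,X\rangle<0$ to (\ref{Mformula}): the weight constraint $\mu_i+\mu_j=\lambda$ for nonzero contributions to the first term, together with the analogous shift constraints $\mu_j-\mu_i=\lambda$ for the $L_X^\mu$ and $R_X^\mu$ terms, pins down $L_X^\mu=R_X^\mu$. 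Finally $\mathfrak{A}_-\cap\textnormal{ann}(\mu)\subset\mathfrak{A}_-\cap\mathfrak{A}_+=\{0\}$ by (i).

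For (iv), given $A\in\mathfrak{A}_0$, the inner derivation $B=L_A^\mu-R_A^\mu=\textnormal{ad}_A\in\textnormal{Der}(\mu)$ satisfies $[D_\mu,B]=\textnormal{ad}_{D_\mu A}=0$ since $D_\mu A=0$. Consequently
\[
\operatorname{tr}\textnormal{M}_\mu[B,B^*]=c_\mu\operatorname{tr}[B,B^*]+\operatorname{tr}([D_\mu,B]B^*)=0,
\]
and Corollary~\ref{MD}(ii) yields $B^*=(L_A^\mu-R_A^\mu)^*\in\textnormal{Der}(\mu)$. For reductivity, since $D_\mu|_{\mathfrak{A}_0}=0$ the derivation property gives $\mu(\mathfrak{A}_0,\mathfrak{A}_0)\subset V_0=\mathfrak{A}_0$, so $\mathfrak{A}_0$ is an associative subalgebra with induced Lie bracket $[X,Y]=\mu(X,Y)-\mu(Y,X)$. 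The adjoint image $\textnormal{ad}(\mathfrak{A}_0)\subset\mathfrak{gl}(\mathbb{C}^n)$ is closed under Hermitian adjoint by the first half of (iv) and is therefore reductive by a Mostow-type theorem; the kernel $\mathfrak{A}_0\cap C(\mu)$ of $\textnormal{ad}|_{\mathfrak{A}_0}$ is abelian and central in $\mathfrak{A}_0$, which allows one to lift the reductivity back to $\mathfrak{A}_0$. The principal obstacles are the ideal property of $\mathfrak{A}_+$ in (ii) and the rigid balance forcing $L_X^\mu=R_X^\mu$ in (iii); both rely on mining more information from (\ref{Mformula}) and Corollary~\ref{MD} than the naive grading argument provides.
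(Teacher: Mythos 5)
Your items (i) and (iv) reproduce the paper's arguments (the paper is slightly less explicit than you about the $D_\mu$-invariance of $\textnormal{ann}(\mu)$ in (i), and says nothing about how reductivity of $\mathfrak{A}_0$ follows in (iv)), but (ii) and (iii) have genuine gaps --- precisely the two steps you yourself label ``the principal obstacles.'' First, your route to (ii) rests on $\mathfrak{A}_+$ being a two-sided ideal, which you only announce as a ``plan to squeeze out of Corollary~\ref{MD}''; since a nilpotent ideal is automatically contained in $N(\mu)$, this unproven claim is the entire content of (ii) in your approach. Second, your mechanism for the commutativity in (iii) does not work: expanding (\ref{Mformula}) at a $D_\mu$-eigenvector $X$ of weight $\lambda<0$ only yields $2\sum_{i,j}|\langle\mu(X_i,X_j),X\rangle|^{2}-2\|L_X^{\mu}\|^{2}-2\|R_X^{\mu}\|^{2}=(c_\mu+\lambda)\|X\|^{2}<0$, an inequality between norms that is completely symmetric in $L_X^{\mu}$ and $R_X^{\mu}$ and cannot ``pin down'' $L_X^{\mu}=R_X^{\mu}$.

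The missing idea is the one you already use in (iv), applied at nonzero weight. For $D_\mu X=cX$ one has $[D_\mu,L_X^{\mu}-R_X^{\mu}]=c(L_X^{\mu}-R_X^{\mu})$ and $L_X^{\mu}-R_X^{\mu}\in\textnormal{Der}(\mu)$, hence
\begin{align*}
c\operatorname{tr}(L_X^{\mu}-R_X^{\mu})(L_X^{\mu}-R_X^{\mu})^{*}
=\operatorname{tr}\textnormal{M}_\mu\bigl[(L_X^{\mu}-R_X^{\mu}),(L_X^{\mu}-R_X^{\mu})^{*}\bigr]\geq 0
\end{align*}
by Corollary~\ref{MD}(ii); for $c<0$ this forces $L_X^{\mu}=R_X^{\mu}$ immediately, which is the paper's proof of the central part of (iii). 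Once $X$ is central, $(\mu(X,Y))^{k}=\mu(X^{k},Y^{k})$ with $X^{k}$ in the weight-$kc$ space, which vanishes for large $k$, so $X\in N(\mu)$ by Remark~\ref{radical}. Note also that the paper's logical order is the reverse of yours: it proves (iii) first and then deduces (ii) from it, so the ideal property of $\mathfrak{A}_+$ that blocks your argument is never needed. As written, your proposal establishes (i), (iv), and the nilpotency (but not the ideal property) of $\mathfrak{A}_+$, and leaves the core of (ii) and (iii) unproven.
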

\begin{proof}
For (i), assume that $X\in \textnormal{ann}(\mu)$ and $D_\mu X=cX$,  then  by (\ref{Mformula})
\begin{align*}
\langle\textnormal{M}_{\mu} X, X\rangle=2\sum_{i,j}|\langle\mu(X_i,X_j), X\rangle|^{2}\geq 0.
\end{align*}
Since $\textnormal{M}_{\mu}=c_\mu I+D_\mu$, then $0\leq \langle\textnormal{M}_{\mu} X, X\rangle=(c_\mu+c)\langle X, X\rangle$. It follows  from Lemma~\ref{c} that  $c\geq-c_\mu>0.$ This proves (i).

For (ii), it is an immediate consequence of (iii) by Remark~\ref{radical}. Now, we  prove (iii) as follows. Assume that  $D_\mu X=cX$ for some $c<0$.  Since
$c L_X^{\mu}=[D_\mu,L_X^{\mu}],$
$c R_X^{\mu}=[D_\mu,R_X^{\mu}],$
then
\begin{align*}
c\operatorname{tr} (L_X^{\mu}-R_X^{\mu})(L_X^{\mu}-R_X^{\mu})^{*}&=\operatorname{tr}[D_\mu,(L_X^{\mu}-R_X^{\mu})](L_X^{\mu}-R_X^{\mu})^{*}\\
&=\operatorname{tr}[\textnormal{M}_\mu,(L_X^{\mu}-R_X^{\mu})](L_X^{\mu}-R_X^{\mu})^{*}\\
&=\operatorname{tr}\textnormal{M}_\mu[(L_X^{\mu}-R_X^{\mu}),(L_X^{\mu}-R_X^{\mu})^{*}].
\end{align*}
Noting that  $(L_X^{\mu}-R_X^{\mu})\in\textnormal{Der}(\mu)$,   by Corollary~\ref{MD}  we have
\begin{align*}
c\operatorname{tr} (L_X^{\mu}-R_X^{\mu})(L_X^{\mu}-R_X^{\mu})^{*}\geq 0.
\end{align*}
It follows that  $(L_X^{\mu}-R_X^{\mu})=0$ since  $c<0$. So $X\in C(\mu).$  By Remark~\ref{radical}, it is easy to see that $X\in N(\mu)$.
Using (i), we conclude $\mathfrak{A}_{-}\subset (C(\mu)\cap N(\mu))\setminus\textnormal{ann}(\mu).$ This proves (iii).

For (iv), we first note that
\begin{align*}
[D_\mu,L_A^{\mu}]=L_{D_\mu A}^{\mu}, \quad
[D_\mu,R_A^{\mu}]=R_{D_\mu A}^{\mu},
\end{align*}
for any $A\in \mathfrak{A}.$
If $A\in \mathfrak{A}_0$, we have $[D_\mu,L_A^{\mu}]=[D_\mu,R_A^{\mu}]=0$, and  so
\begin{align*}
\operatorname{tr} \textnormal{M}_\mu[(L_A^{\mu}-R_A^{\mu}),(L_A^{\mu}-R_A^{\mu})^{*}]
&=\operatorname{tr}  (c_\mu I+ D_\mu)[(L_A^{\mu}-R_A^{\mu}),(L_A^{\mu}-R_A^{\mu})^{*}]\\
&=\operatorname{tr}  D_\mu[(L_A^{\mu}-R_A^{\mu}),(L_A^{\mu}-R_A^{\mu})^{*}]\\
&=\operatorname{tr}  [D_\mu,(L_A^{\mu}-R_A^{\mu})](L_A^{\mu}-R_A^{\mu})^{*}\\
&=0.
\end{align*}
By Corollary~\ref{MD}, it follows that $(L_A^{\mu}-R_A^{\mu})^{*}\in \textnormal{Der}(\mu)$ since $(L_A^{\mu}-R_A^{\mu})\in \textnormal{Der}(\mu)$.   This proves (iv).
\end{proof}

In the sequel, we give a description of the critical points in terms of those which are nilpotent.
Let $[\lambda]$ be a nilpotent  critical point of $F_{m}: \mathcal{A}_{m} \rightarrow \mathbb{R}$. Define
\begin{align*}
L(\lambda):&=\{\Phi\in\textnormal{End}(\mathbb{C}^m):\Phi(\lambda(X,Y))=\lambda(\Phi X,Y)\},\\
R(\lambda):&=\{\Psi\in\textnormal{End}(\mathbb{C}^m):\Psi(\lambda(X,Y))=\lambda(X,\Psi Y)\}.
\end{align*}
Moreover, we set
$\Gamma_l=\{\Phi\in L(\lambda): [\Phi,\Psi]=0, \forall  \Psi\in R(\lambda)\},$
$\Gamma_r=\{\Psi\in R(\lambda): [\Phi,\Psi]=0, \forall  \Phi\in L(\lambda)\}$, and
\begin{align*}
\Gamma(\lambda):&=\{(\Phi,\Psi)\in \Gamma_l\times \Gamma_r:\lambda(\cdot,\Phi (\cdot))=\lambda(\Psi(\cdot),\cdot)\}.
\end{align*}
For any $(\Phi_i,\Psi_i)\in \Gamma(\lambda)$, $i=1,2$, we  define $(\Phi_1,\Psi_1)(\Phi_2,\Psi_2):=(\Phi_1\Phi_2,\Psi_2\Psi_1).$ Then it follows that $\Gamma(\lambda)$ is an associative algebra.

\begin{Lemma}\label{semisimple}
Assume that $\mathcal{S}\subset\Gamma(\lambda)$ is a subalgebra such that $(\Phi^{*},\Psi^{*})\in \mathcal{S}$ for any $(\Phi,\Psi)\in \mathcal{S}$, then $\mathcal{S}$ is a semisimple associative algebra.
\end{Lemma}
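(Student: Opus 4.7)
The plan is to exploit the fact that $\mathcal{S}$ is closed under the componentwise adjoint $(\Phi,\Psi)\mapsto(\Phi^*,\Psi^*)$, and mimic the classical argument that a $*$-closed subalgebra of a matrix algebra has zero Jacobson radical. First I would verify the structural input: with the multiplication $(\Phi_1,\Psi_1)(\Phi_2,\Psi_2)=(\Phi_1\Phi_2,\Psi_2\Psi_1)$ on $\Gamma(\lambda)$, the map $\sigma:(\Phi,\Psi)\mapsto(\Phi^*,\Psi^*)$ is an \emph{anti-}involution, since a direct check gives
\begin{align*}
\sigma\bigl((\Phi_1,\Psi_1)(\Phi_2,\Psi_2)\bigr)=(\Phi_2^*\Phi_1^*,\Psi_1^*\Psi_2^*)=\sigma(\Phi_2,\Psi_2)\,\sigma(\Phi_1,\Psi_1).
\end{align*}
Thus $\mathcal{S}$ carries an anti-involution for which $(\Phi,\Psi)\mapsto(\Phi\Phi^*,\Psi^*\Psi)$ plays the role of $a\mapsto aa^*$.

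Next I would argue by contradiction: suppose the radical $N=N(\mathcal{S})$ is nonzero, and pick $0\neq a=(\Phi,\Psi)\in N$. Since $\mathcal{S}$ is $\sigma$-closed we have $\sigma(a)\in\mathcal{S}$, and because $N$ is a two-sided ideal the element $b:=a\,\sigma(a)=(\Phi\Phi^*,\Psi^*\Psi)$ lies in $N$. A short induction using the twisted multiplication on the second component shows
\begin{align*}
b^{k}=\bigl((\Phi\Phi^*)^{k},\,(\Psi^*\Psi)^{k}\bigr)\quad\text{for every }k\geq1.
\end{align*}

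Because $N$ is a nilpotent ideal in the finite-dimensional algebra $\mathcal{S}$, some power $N^{k}$ vanishes, so in particular $b^{k}=0$. This forces $(\Phi\Phi^*)^{k}=0$ and $(\Psi^*\Psi)^{k}=0$ inside $\operatorname{End}(\mathbb{C}^{m})$. But $\Phi\Phi^*$ and $\Psi^*\Psi$ are positive semidefinite Hermitian matrices, and a positive semidefinite matrix with a vanishing power is itself zero; hence $\Phi\Phi^*=0$ and $\Psi^*\Psi=0$, which in turn give $\Phi=0$ and $\Psi=0$, contradicting $a\neq0$. Therefore $N=0$ and $\mathcal{S}$ is semisimple.

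I do not anticipate a genuine obstacle, but the single point that needs careful bookkeeping is the anti-involution/twisted-product interplay: one has to keep the two components of $\Gamma(\lambda)$ in the correct order when computing $b^{k}$, since the multiplication reverses on the $\Psi$ side. Once that is handled, the reduction ``self-conjugate positive element nilpotent $\Rightarrow$ zero'' closes the argument cleanly.
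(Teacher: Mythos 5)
Your proof is correct, and it takes a somewhat different route from the paper's. You verify that $(\Phi,\Psi)\mapsto(\Phi^*,\Psi^*)$ is an anti-involution of $\Gamma(\lambda)$, take $0\neq a$ in the radical $N(\mathcal{S})$, observe that $b=a\,\sigma(a)=(\Phi\Phi^*,\Psi^*\Psi)$ lies in the nilpotent ideal $N(\mathcal{S})$, and then kill $b$ (hence $a$) by the fact that a positive semidefinite matrix with a vanishing power is zero; the only bookkeeping point, the powers of $b$ in the twisted product, is harmless since both components of $b^k$ are powers of a single matrix. The paper instead introduces the Hermitian inner product $\langle H_1,H_2\rangle=\operatorname{tr}\Phi_1\Phi_2^*+\operatorname{tr}\Psi_1\Psi_2^*$, checks the adjointness relations $\langle HH_1,H_2\rangle=\langle H_1,H^*H_2\rangle$ and $\langle H_1H,H_2\rangle=\langle H_1,H_2H^*\rangle$, deduces that the orthogonal complement of an ideal is an ideal, splits $\mathcal{S}=\mathcal{R}\oplus\mathcal{N}$ with $\mathcal{N}$ the radical, argues that $\mathcal{N}$ must then be the annihilator of $\mathcal{S}$, and finally uses $HH^*=0\Rightarrow H=0$. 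Both arguments hinge on the same closing step (a self-adjoint ``positive'' element of the radical must vanish), but yours bypasses the orthogonal-decomposition machinery and the identification of $\mathcal{N}$ with the annihilator, using only the nilpotency of the radical; this makes it more direct and arguably cleaner, whereas the paper's inner-product formulation has the side benefit of exhibiting the complete reducibility of $\mathcal{S}$ explicitly via orthogonal complements.
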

\begin{proof}
Note that $\mathcal{S}$ is an associative  algebra of  matrices, which are closed under conjugate transpose. Define an Hermitian inner product on $\mathcal{S}$ by
 \begin{align*}
\langle H_1,H_2\rangle:=\operatorname{tr}{H_1H_2^*}=\operatorname{tr}{\Phi_1\Phi_2^*}+\operatorname{tr}{\Psi_1\Psi_2^*}, ~\forall H_i=(\Phi_i,\Psi_i)\in \mathcal{S},i=1,2.
\end{align*}
Then it follows that  $\langle HH_1,H_2\rangle=\langle H_1,H^{*}H_2\rangle$, $\langle H_1H,H_2\rangle=\langle H_1,H_2H^{*}\rangle$ for any $H,H_1,H_2\in \mathcal{S}.$
Let $I$ be an ideal in $\mathcal{S}$ and $I^{\perp}$ denote the orthogonal complement of $I$.
Then it is easy to see that $I^{\perp}$ is also an ideal of $\mathcal{S}.$ Let $\mathcal{S}=\mathcal{R}\oplus \mathcal{N}$, where $\mathcal{N}$ is the radical of $\mathcal{S}$ and $\mathcal{R}=\mathcal{N}^{\perp}$. It follows that $\mathcal{R}$ and $\mathcal{N}$ are both ideals of $\mathcal{S}.$ Moreover, $\mathcal{R}$ is semisimple, and $\mathcal{N}$ is the annihilator of $\mathcal{S}$ (by considering the derived series). Since $\mathcal{S}$ is an associative  algebra of  matrices which are closed under conjugate transpose, then $HH^*=0$ for any $H\in \mathcal{N}$, hence $H=0$.  So $\mathcal{N}=0$, and  $\mathcal{S}$ is semisimple.
\end{proof}

\begin{theorem}\label{converse}
Let $[\lambda]$ be a nilpotent  critical point of $F_{m}: \mathcal{A}_{m} \rightarrow \mathbb{R}$ with $\textnormal{M}_\lambda=c_\lambda I+D_\lambda$   of type $(k_2<\cdots<k_r;d_2,\cdots,d_r),$ where  $c_{\lambda} \in \mathbb{R}$ and $D_{\lambda} \in \textnormal{Der}(\lambda)$. Assume that
$\mathcal{S}\subset\Gamma(\lambda)$ is a subalgebra  of dimension $d_1$ such that $(\Phi^{*},\Psi^{*})\in \mathcal{S}$, $[D_\lambda,\Phi]=[D_\lambda,\Psi]=0$ for any $(\Phi,\Psi)\in \mathcal{S}.$
 Consider the following  semidirect sum
\begin{align*}
\mu=\mathcal{S}\ltimes\lambda,
\end{align*}
where
\begin{align*}
\mu((\Phi_1,\Psi_1)+X_1,(\Phi_2,\Psi_2)+X_2):=(\Phi_1\Phi_2,\Psi_2\Psi_1)+\Phi_1(X_2)+\Psi_2(X_1)+X_1X_2,
\end{align*}
for any $(\Phi_1,\Psi_1),(\Phi_2,\Psi_2)\in \mathcal{S}$, $X_1,X_2\in \mathbb{C}^m$. Then $\mu$ is an associative algebra. If we extend the Hermitian inner product on $\mathbb{C}^{m}$ by setting
\begin{align*}
 \langle H,K\rangle=-\frac{2}{c_\lambda}(\operatorname{tr}L^{\mathcal{S}}_{H}L^{\mathcal{S}}_{{K}^{*}}
 +\operatorname{tr}{H}{K}^{*}),  ~~H,K\in \mathcal{S},
\end{align*}
 then $[\mu]$ is a critical point  of type $(0,k_2<\cdots<k_r;d_1,d_2,\cdots,d_r)$ for the functional  $F_{n}: \mathcal{A}_{n} \rightarrow \mathbb{R}$,  where $n=d_1+m.$
\end{theorem}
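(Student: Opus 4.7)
The proof has three stages: (A) verify associativity of $\mu$; (B) exhibit $\textnormal{M}_\mu$ in the form $c_\mu I + D_\mu$ with $D_\mu\in\textnormal{Der}(\mu)$; (C) read off the type.

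\textbf{Stage A.} I will check $\mu(a,\mu(b,c))=\mu(\mu(a,b),c)$ by splitting each of $a,b,c$ into its $\mathcal{S}$- and $\mathbb{C}^m$-parts and inspecting the eight resulting cases. The pure $\mathcal{S}\times\mathcal{S}\times\mathcal{S}$ and $\mathbb{C}^m\times\mathbb{C}^m\times\mathbb{C}^m$ cases are the associativity of $\mathcal{S}$ and of $\lambda$. The mixed cases unfold into precisely the defining identities of $L(\lambda),R(\lambda),\Gamma_l,\Gamma_r$ and $\Gamma(\lambda)$: for example the case $(X_1,H,X_2)$ reduces to $\lambda(X_1,\Phi X_2)=\lambda(\Psi X_1,X_2)$, the case $(H_1,X,H_2)$ to $[\Phi_1,\Psi_2]=0$ applied to $X$, and the $(X_1,X_2,H)$/$(H,X_1,X_2)$ cases to the compatibilities in $L(\lambda),R(\lambda)$.

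\textbf{Stage B.} This is the main step. The natural candidate is $D_\mu:=0|_{\mathcal{S}}\oplus D_\lambda|_{\mathbb{C}^m}$, $c_\mu:=c_\lambda$. First I will check $D_\mu\in\textnormal{Der}(\mu)$: on $\mathbb{C}^m\times\mathbb{C}^m$ this is $D_\lambda\in\textnormal{Der}(\lambda)$; on $\mathcal{S}\times\mathcal{S}$ both sides vanish; and on the mixed slots the Leibniz identity for $D_\mu$ applied to $\mu(H,X)=\Phi X$ and $\mu(X,H)=\Psi X$ is exactly $[D_\lambda,\Phi]=[D_\lambda,\Psi]=0$, which is hypothesized. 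Then I compute $\textnormal{M}_\mu$ via (\ref{Mformula}) in the orthonormal basis obtained by concatenating an orthonormal basis $\{H_\alpha\}$ of $\mathcal{S}$ (with respect to the renormalized inner product in the statement) and $\{X_i\}$ of $\mathbb{C}^m$. Because $\mu(\mathcal{S},\mathbb{C}^m),\mu(\mathbb{C}^m,\mathcal{S})\subset\mathbb{C}^m$ and $\mu(\mathcal{S},\mathcal{S})\subset\mathcal{S}$, $\mu(\mathbb{C}^m,\mathbb{C}^m)\subset\mathbb{C}^m$, every cross term $\langle\textnormal{M}_\mu H_\alpha, X_i\rangle$ vanishes by inspection of the three sums in (\ref{Mformula}). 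On the $\mathbb{C}^m$-block, the surviving $\mathcal{S}$-contributions assemble into terms of the shape $\sum_\alpha\Phi_\alpha\Phi_\alpha^*$, $\sum_\alpha\Psi_\alpha^*\Psi_\alpha$, etc.; these commute with $D_\lambda$ by the hypothesis $[D_\lambda,\Phi_\alpha]=[D_\lambda,\Psi_\alpha]=0$, so the block has the form $\textnormal{M}_\lambda+E$ for some Hermitian $E$ commuting with $D_\lambda$, and $E$ will have to be shown equal to (something scalar). The crucial check is the $\mathcal{S}$-block: using that elements of $\mathcal{S}$ are closed under $*$ and that $\mathcal{S}$ is a semisimple associative subalgebra (Lemma~\ref{semisimple}), I will verify that the three terms of $\textnormal{M}_\mu|_{\mathcal{S}}$ combine exactly to $c_\lambda I$; this is where the normalization $\langle H,K\rangle=-\frac{2}{c_\lambda}(\operatorname{tr} L^{\mathcal{S}}_H L^{\mathcal{S}}_{K^*}+\operatorname{tr} HK^*)$ is engineered so that the coefficient works out.

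\textbf{Stage C.} Granted $\textnormal{M}_\mu=c_\lambda I+D_\mu$, the spectrum of $D_\mu$ is the disjoint union of the spectrum of $D_\lambda$ (eigenvalues $k_2,\dots,k_r$ with multiplicities $d_2,\dots,d_r$) and a zero eigenvalue of multiplicity $d_1=\dim\mathcal{S}$; since $\lambda$ is nilpotent one has $k_2>0$ (because by Theorem~\ref{structure}(iii) a nilpotent critical point has $\mathfrak{A}_-=0$, so all eigenvalues of $D_\lambda$ on $[\lambda]$'s representative are non-negative, hence $>0$ after omitting the zero block), so $0<k_2<\cdots<k_r$ and the type is $(0<k_2<\cdots<k_r;d_1,d_2,\dots,d_r)$, as claimed.

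\textbf{Main obstacle.} The decisive bookkeeping is Stage B: pinning down the four blocks of $\textnormal{M}_\mu$ and verifying that the $\mathcal{S}$-block collapses to $c_\lambda I$ under the prescribed renormalization, while simultaneously the correction on the $\mathbb{C}^m$-block that arises from summing over $\{H_\alpha\}$ either cancels or gets absorbed into $c_\lambda I$. Everything else is either immediate from the definitions or an application of the already-proved Theorem~\ref{MID}.
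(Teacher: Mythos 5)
Your overall architecture (block\mbox{-}diagonal $L^\mu_H$, $R^\mu_H$; candidate $c_\mu=c_\lambda$, $D_\mu=0\oplus D_\lambda$; block-by-block evaluation of $\textnormal{M}_\mu$) matches the paper's proof, but the two decisive computations of Stage B are respectively justified by a wrong reason and left open. First, the cross terms do \emph{not} vanish by grading inspection. Taking $X\in\mathbb{C}^m$ and $H=(\Phi,\Psi)\in\mathcal{S}$ in (\ref{Mformula}), the first sum indeed dies because each product $\mu(\cdot,\cdot)$ of basis vectors is homogeneous, but in the second and third sums the contributions with both basis indices in $\mathbb{C}^m$ survive the grading: they equal $-2\sum_i\langle\lambda(X_i,X),\Psi(X_i)\rangle-2\sum_i\langle\lambda(X,X_i),\Phi(X_i)\rangle=-2\operatorname{tr}\Psi^*R^\lambda_X-2\operatorname{tr}\Phi^*L^\lambda_X$. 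These vanish only because $(\Phi^*,\Psi^*)\in\mathcal{S}\subset\Gamma(\lambda)$ lets one rewrite them as $-2\operatorname{tr}R^\lambda_{\Psi^*X}-2\operatorname{tr}L^\lambda_{\Phi^*X}$, and because $\lambda$ is \emph{nilpotent}, so every left and right multiplication of $\lambda$ is a nilpotent operator with zero trace. Your plan never invokes the nilpotency of $\lambda$, which is exactly the hypothesis needed at this point; as written, the step would fail for non-nilpotent $\lambda$.

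Second, on the $\mathbb{C}^m$-block you correctly isolate a correction term $E$ coming from summing over an orthonormal basis $\{H_\alpha=(\Phi_\alpha,\Psi_\alpha)\}$ of $\mathcal{S}$, but you only promise to show it is ``something scalar''; in fact one needs $E=0$ exactly, and the mechanism is a specific choice of basis. Up to a factor $2$, $E=\sum_\alpha[\Phi_\alpha,\Phi_\alpha^*]+\sum_\alpha[\Psi_\alpha,\Psi_\alpha^*]$, which is independent of the orthonormal basis chosen; since $\mathcal{S}$ is a $*$-closed semisimple matrix algebra (Lemma~\ref{semisimple}), it admits an orthonormal basis with $\Phi_\alpha^*=-\Phi_\alpha$ and $\Psi_\alpha^*=-\Psi_\alpha$, whence $E=0$ and $\textnormal{M}_\mu|_{\mathbb{C}^m}=\textnormal{M}_\lambda$ on the nose. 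The same skew-Hermitian basis is what makes the $\mathcal{S}$-block collapse, via $\langle\textnormal{M}_\mu H,H\rangle=-2\bigl(\operatorname{tr}L^{\mathcal{S}}_HL^{\mathcal{S}}_{H^*}+\operatorname{tr}HH^*\bigr)=c_\lambda\|H\|^2$ and polarization. Until these two points are supplied, Stage B --- which you yourself flag as the main obstacle --- is not done. (A peripheral remark: your Stage C inference that $k_2>0$ from Theorem~\ref{structure}(iii) is not justified, since for nilpotent $\lambda$ that statement does not force $\mathfrak{A}_-=0$; but nothing in the theorem requires it --- one simply records the eigenvalues of $D_\mu=0\oplus D_\lambda$.)
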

\begin{proof}
For any $H=(\Phi,\Psi)\in\mathcal{S}$, we have
\begin{align*}
L^\mu_{H}=\left(\begin{array}{cc}
L^{\mathcal{S}}_{H}&0\\
0&\Phi
\end{array}\right),\quad
R^\mu_{H}=\left(\begin{array}{cc}
R^{\mathcal{S}}_{H}&0\\
0&\Psi
\end{array}\right),
\end{align*}
where $L^\mu_{H}$, $R^\mu_{H}$ (resp. $L^{\mathcal{S}}_{H}$, $R^{\mathcal{S}}_{H}$) denote the left and right multiplication by $H$ of the algebra $\mu$ (resp. $\mathcal{S}$), respectively.  By Lemma~\ref{semisimple}, we know that  $\mathcal{S}$ is a semisimple associative algebra.  Then it follows that there is an orthonormal basis $\{H_i=(\Phi_i,\Psi_i)\}\subset\mathcal{S}$ such that  ${\Phi_i}^*=-\Phi_i$,  ${\Psi_i}^*=-\Psi_i$, and  $L^\mu_{H_i}$, $R^\mu_{H_i}$ are skew-Hermitian for each $i.$  Let $\{H_i\}\cup\{X_i\}$ be an orthonormal basis of $\mathbb{C}^{n}=\mathcal{S}\oplus\mathbb{C}^{m}.$  Then for any $H=(\Phi,\Psi)\in \mathcal{S}$ and $ X\in\mathbb{C}^{m}$,  we have
\begin{align*}
\langle\textnormal{M}_{\mu} X, H\rangle&=-2 \sum_{i,j}\langle\mu(X_i,X), X_{j}\rangle \overline{\langle\mu(X_i,H), X_{j}\rangle}
-2 \sum_{i,j}\langle\mu(X, X_i), X_{j}\rangle \overline{\langle\mu(H, X_i), X_{j}\rangle}\\
&=-2 \sum_{i,j}\langle\lambda(X_i,X), X_{j}\rangle \overline{\langle \Psi(X_i), X_{j}\rangle}
-2 \sum_{i,j}\langle\lambda(X, X_i), X_{j}\rangle \overline{\langle\Phi(X_i), X_{j}\rangle}\\
&=-2 \sum_{i}\langle\lambda(X_i,X), \Psi(X_i)\rangle
-2 \sum_{i}\langle\lambda(X, X_i), \Phi(X_i)\rangle \\
&=-2\operatorname{tr}\Psi^{*}R^{\lambda}_X-2\operatorname{tr}\Phi^{*}L^{\lambda}_X\\
&=-2\operatorname{tr}R^{\lambda}_{\Psi^{*}(X)}-2\operatorname{tr}L^{\lambda}_{\Phi^{*}(X)}\\
&=0,
\end{align*}
where $L^\lambda_{X}$, $R^\lambda_{X}$  denote the left and right multiplication by $X$ of the algebra $\lambda$, respectively,
and the last two equalities follow  from that $\lambda$ is nilpotent  and  $(\Phi^{*},\Psi^{*})\in \mathcal{S}.$  Moreover, since ${\Phi_i}^*=-\Phi_i$,  ${\Psi_i}^*=-\Psi_i$ for each $i$, then
$[\Phi_i,{\Phi_i}^*]=0$, $[\Psi_i,{\Psi_i}^*]=0.$ So  by (\ref{Mformula}) we have
\begin{align*}
\langle\textnormal{M}_{\mu} X, Y\rangle
&=2\sum_{i,j} \overline{\langle\mu( H_i,X_j), X\rangle}\langle\mu(H_i,X_j), Y\rangle+2\sum_{i,j} \overline{\langle\mu( X_i,H_j), X\rangle}\langle\mu(X_i,H_j), Y\rangle\\
&\quad +2\sum_{i,j} \overline{\langle\mu( X_i,X_j), X\rangle}\langle\mu(X_i,X_j), Y\rangle
-2\sum_{i,j} \langle\mu( H_i,X), X_j\rangle\overline{\langle\mu(H_i,Y), X_j\rangle}
\\
&\quad -2\sum_{i,j} \langle\mu( X_i,X), X_j\rangle\overline{\langle\mu(X_i,Y),X_j\rangle}
-2\sum_{i,j} \langle\mu( X,H_i), X_j\rangle\overline{\langle\mu(Y,H_i),X_j\rangle}
\\
&\quad -2\sum_{i,j} \langle\mu( X,X_i), X_j\rangle\overline{\langle\mu(Y,X_i),X_j\rangle}\\
&=\langle\textnormal{M}_{\lambda} X, Y\rangle+2\sum_i\langle[\Phi_i,{\Phi_i}^*] (X, Y\rangle+2\sum_i\langle[\Psi_i,{\Psi_i}^*] (X), Y\rangle\\
&=\langle\textnormal{M}_{\lambda} X, Y\rangle,
\end{align*}
for any $X,Y\in\mathbb{C}^{m}$.
Therefore ${\textnormal{M}_{\mu}}|_{\mathbb{C}^{m}}=\textnormal{M}_{\lambda}=c_\lambda I+D_\lambda.$ On the other hand, noting that $L^\mu_{H_i}$  and $R^\mu_{H_i}$ are skew-Hermitian for each $i$, then for any $H=(\Phi,\Psi)\in \mathcal{S}$, we have
\begin{align*}
\langle\textnormal{M}_{\mu} H, H\rangle&=2 \sum_{i,j} \overline{\langle\mu( H_i,H_j), H\rangle}\langle\mu(H_i,H_j), H\rangle\\
&\quad-2 \sum_{i,j}\langle\mu(H_i,H), H_{j}\rangle \overline{\langle\mu(H_i,H), H_{j}\rangle}-2 \sum_{i,j}\langle\mu(X_i,H), X_{j}\rangle \overline{\langle\mu(X_i,H), X_{j}\rangle}\\
&\quad -2 \sum_{i,j}\langle\mu(H, H_i), H_{j}\rangle \overline{\langle\mu(H, H_i), H_{j}\rangle}
-2 \sum_{i,j}\langle\mu(H, X_i), X_{j}\rangle \overline{\langle\mu(H, X_i), X_{j}\rangle}\\
&=-2(\operatorname{tr}L^{\mathcal{S}}_{H}L^{\mathcal{S}}_{{H}^{*}}
 +\operatorname{tr}{\Phi\Phi^*}+\operatorname{tr}{\Psi\Psi^*})\\
&=-2(\operatorname{tr}L^{\mathcal{S}}_{H}L^{\mathcal{S}}_{{H}^{*}}
 +\operatorname{tr}{H}{H}^{*})\\
 &=c_\lambda \langle H, H\rangle.
\end{align*}
So   $\textnormal{M}_\mu=c_\mu I+D_\mu,$ where  $c_\mu=c_\lambda,$ and
\begin{align*}
D_\mu=\left( {\begin{array}{*{20}{c}}
	0&0\\
	0&D_\lambda\\
\end{array}} \right)\in\textnormal{Der}(\mu).
\end{align*}
This completes the proof.
\end{proof}

\begin{remark}
Let the notation be as Theorem~\ref{structure}. If $(L_A^{\mu})^{*}\in \{L_A^{\mu}:A\in\mathfrak{A}_0\}$ and $(R_A^{\mu})^{*}\in \{R_A^{\mu}:A\in\mathfrak{A}_0\}$ for any $A\in \mathfrak{A}_0.$ Then it follows from a similar proof of  Lemma~\ref{semisimple} that $\mathfrak{A}_0$ is a semisimple associative algebra. Moreover, the radical of $[\mu]$ corresponds to a critical point of type $(k_1<\cdots<\hat{k}_s<\cdots<k_r;d_1,\cdots,\hat{d}_s,\cdots,d_r)$ by Theorem~\ref{converse}, where $k_s=0$.
\end{remark}

\section{Examples}\label{eq}
In this section,  we classify the critical points of $F_{n}: \mathcal{A}_{n} \rightarrow \mathbb{R}$  for $n=2$ and $3$, respectively.
It shows that every $2$-dimensional   associative algebra is isomorphic to a critical point of $F_{2}$, and there exists only one $3$-dimensional   associative algebra which is not isomorphic to any critical point of $F_{3}$. Finally,  based on the discussion in previous sections,  we collect some natural and interesting questions.

For reader's convenience, we recall the notation  in \cite{FP09}.
Let $\{e_1,e_2,\cdots,e_n\}$ be a basis of $\mathbb{C}^{n}$. Define the bilinear maps $\psi_k^{i,j}:\mathbb{C}^{n}\times\mathbb{C}^{n}\rightarrow\mathbb{C}^{n}$ by
\begin{align*}
\psi_k^{i,j}(e_me_n)=\delta_m^{i}\delta_n^{j}e_k.
\end{align*}
It follows that any algebra can be expressed in the form $d=\sum_{ijk}c_{ij}^k\psi_k^{i,j}$, where $c_{ij}^k\in\mathbb{C}$ are the structure constants.

\subsection{Two-dimensional case}
The classification of two-dimensional associative algebras can be found in \cite[TABLE 1]{FP09}.
We give the  classification of  the critical points of $F_{2}: \mathcal{A}_{2} \rightarrow \mathbb{R}$  as follows.
\begin{align*}
&\text { TABLE I. }  \text {Two-dimensional associative algebras, critical types and critical values. }\\
&\begin{array}{llllc}
\hline \hline
  \text{Multiplication relation}\quad\quad\quad\quad &  \text{Critical type}\quad\quad\quad\quad&\text{Critical value} \\
\hline
\left\{ d_1=\psi_1^{1,1} \right.&(0<1;1,1) & 4 \\
\left\{ d_2=\psi_1^{1,1}+\psi_2^{1,2}\right.&(0<1;1,1) & 4 \\
\left\{ d_3=\psi_1^{1,1}+\psi_2^{2,1} \right.&  (0<1;1,1) & 4 \\
\left\{ d_4=\psi_1^{1,1}+\psi_2^{2,2} \right. & (0;2) & 2 \\
\left\{ d_5=\psi_2^{1,1} \right.& (1<2;1,1) & 20\\
\left\{ d_6=\psi_1^{1,1}+\psi_2^{1,2}+\psi_2^{2,1} \right.&(0<1;1,1) & 4\\
\hline \hline
\end{array}
\end{align*}
Indeed, endow these algebras with the Hermitian inner product $\langle\cdot,\cdot\rangle$   so  that  $\{e_1,e_{2}\}$ is an orthonormal  basis, then it is easy to obtain TABLE I.
For example,  the multiplication relation of $\mu:=(d_6,\langle\cdot,\cdot\rangle)$ is given by:  $e_1e_1=e_1,e_1e_2=e_2,e_2e_1=e_2$. With respect to the given orthonormal  basis $\{e_1,e_{2}\}$, the left and right multiplications of $\mu$ are represented by
\begin{align*}
L^\mu_{e_1}=\left(\begin{array}{cc}
1&0\\
0&1
\end{array}\right),\quad
L^\mu_{e_2}=\left(\begin{array}{cc}
0&0\\
1&0
\end{array}\right),\quad
R^\mu_{e_1}=\left(\begin{array}{cc}
1&0\\
0&1
\end{array}\right),\quad
R^\mu_{e_2}=\left(\begin{array}{cc}
0&0\\
1&0
\end{array}\right).
\end{align*}
It follows from  (\ref{M}) that
\begin{align*}
\textnormal{M}_\mu=\left(\begin{array}{cc}
-6&0\\
0&0
\end{array}\right)
\end{align*}
Set $c_\mu:=\frac{\operatorname{tr}\textnormal{M}_\mu^2}{\operatorname{tr}\textnormal{M}_\mu}$, then  $c_\mu=-6$. It follows that  $\textnormal{M}_\mu=c_\mu I+D_\mu,$ where
\begin{align*}
D_\mu=\left(\begin{array}{cc}
0&0\\
0&6
\end{array}\right)
\end{align*}
is clearly a derivation of $\mu.$  So $[\mu]$ is a critical point of $F_{2}: \mathcal{A}_{2} \rightarrow \mathbb{R}$ with the critical type $(0<1;1,1)$ and $F_2([\mu])=4.$

\subsection{Three-dimensional case}
The complete classification of three-dimensional associative algebras can be found in \cite[TABLE 2]{FP09}. The  following table gives the classification of  the critical points of $F_{3}: \mathcal{A}_{3} \rightarrow \mathbb{R}$.
\begin{align*}
&\text { TABLE II. }  \text {Three-dimensional associative algebras, critical types and critical values. }\\
&\begin{array}{llllc}
\hline \hline
  \text{Multiplication relation }\quad\quad\quad\quad &  \text{Critical type}\quad\quad\quad\quad&\text{Critical value} \\
\hline
\left\{ d_1=\psi_1^{1,1} \right.&(0<1;1,2) & 4 \\
\left\{ d_2=\psi_1^{1,1}+\psi_3^{2,2}\right.&(0<1<2;1,1,1) & \frac{10}{3} \\
\left\{ d_3=\psi_1^{1,1}+\psi_3^{1,3} \right.&  (0<1;1,2) & 4 \\
\left\{ d_4=\psi_1^{1,1}+\psi_3^{3,1} \right. & (0<1;1,2) & 4  \\
\left\{ d_5=\psi_1^{1,1}+\psi_3^{1,3}+\psi_3^{3,1} \right.& (0<1;1,2) & 4 \\
\left\{ d_6=\psi_1^{1,1}+\psi_3^{3,3} \right.&(0<1;2,1) & 2\\
\left\{ d_7=\psi_1^{1,1}+\psi_2^{2,1}+\psi_3^{1,3} \right.&(0<1;1,2) & 4 \\
\left\{ d_8=\psi_1^{1,1}+\psi_2^{2,1}+\psi_3^{3,1}  \right.&(0<1;1,2) & 4 \\
\left\{ d_9=\psi_1^{1,1}+\psi_2^{2,1}+\psi_3^{1,3}+\psi_3^{3,1} \right.&(0<1;1,2) & 4 \\
\left\{ d_{10}=\psi_1^{1,1}+\psi_2^{2,1}+\psi_3^{3,3} \right.&(0<1;2,1) & 2\\
\left\{ d_{11}=\psi_1^{1,1}+\psi_2^{2,2}+\psi_3^{2,3} \right.&(0<1;2,1) & 2\\
\left\{ d_{12}=\psi_1^{1,1}+\psi_2^{2,2}+\psi_3^{2,3}+\psi_3^{3,2} \right.&(0<1;2,1) & 2\\
\left\{ d_{13}=\psi_1^{1,1}+\psi_2^{2,2}+\psi_3^{2,3}+\psi_3^{3,1} \right.&(0<1;2,1) & 2\\
\left\{ d_{14}=\psi_1^{1,1}+\psi_2^{2,2}+\psi_3^{3,3} \right.&(0;3) & \frac{4}{3}\\
\left\{ d_{15}=\psi_2^{1,1} \right.&(3<5<6;1,1,1) & 20\\
\left\{ d_{16}=\psi_2^{1,1}+\psi_3^{1,2}+\psi_3^{2,1} \right.&(1<2<3;1,1,1) & \frac{20}{3}\\
\left\{ d_{17}=\psi_1^{1,1}+\psi_2^{1,1}+\psi_2^{1,2}+\psi_2^{2,1}+\psi_3^{1,3} \right.&(0<1;1,2) & 4\\
\left\{ d_{18}=\psi_1^{1,1}+\psi_2^{1,1}+\psi_2^{1,2}+\psi_2^{2,1}+\psi_3^{1,3}+\psi_3^{3,1}  \right.&(0<1;1,2) & 4\\
\left\{ d_{19}=\psi_3^{3,3}+\psi_2^{1,1}+\psi_1^{1,3}+\psi_1^{3,1}+\psi_2^{2,3}+\psi_2^{3,2} \right.&(0<1<2;1,1,1) & \frac{10}{3}\\
\left\{ d_{20}=\psi_1^{1,1}+\psi_2^{1,2}+\psi_3^{1,3} \right.&(0<1;1,2) & 4\\
\left\{ d_{21}=\psi_3^{1,1}+\psi_3^{1,2}-\psi_3^{2,1} \right.&- & -\\
\left\{ d_{22}=x\psi_3^{1,2}+y\psi_3^{2,1}\right.&(1<2;2,1) & 12\\
\hline \hline
\end{array}
\end{align*}
Indeed,  endow the algebras with the Hermitian inner product $\langle\cdot,\cdot\rangle$   so  that  $\{e_1,e_{2},e_3\}$ is an orthonormal  basis, it  is easy  to obtain all  cases  in  TABLE II except for  $d_2,d_{10},d_{11},d_{12},d_{13},d_{17},d_{18},d_{21}$.
For the cases $d_2,d_{10},d_{11},d_{12}$,  it follows from Remark~\ref{crisum} and TABLE I.
For the cases $d_{13},d_{17},d_{18}$, it follows from \cite{KSTT} that   $d_{13}\cong U_1^3$, $d_{17}\cong W_{10}^3$ and  $d_{18}\cong U_{0}^3$,  where $U_1^3,W_{10}^3$ and $ U_{0}^3$ are defined by
\begin{align*}
U_1^3: \quad &\psi_1^{1,1}+\psi_1^{3,3}+\psi_2^{1,2}+\psi_2^{2,1}+\psi_2^{2,3}+\psi_3^{1,3}+\psi_3^{3,1}-\psi_3^{3,2}.\\
W_{10}^3:\quad & \psi_1^{1,2}+\psi_1^{2,1}+\psi_2^{2,2}+\psi_3^{2,3}.\\
U_{0}^3: \quad &\psi_2^{1,1}+ \psi_2^{1,2}+ \psi_2^{2,1}+ \psi_3^{1,3}+ \psi_3^{3,1}.
\end{align*}
Endow $U_1^3,W_{10}^3$ and $ U_{0}^3$ with the Hermitian inner product $\langle\cdot,\cdot\rangle$   so  that  $\{e_1,e_2,e_3\}$  is an orthonormal  basis, then it is easy to obtain the corresponding critical types and values for  $d_{13},d_{17},d_{18}$.

In the sequel,  we  follow a similar procedure as in \cite{KOTT,TT2018} to classify all Hermitian inner products on   $d_{21},$  then  show that for any Hermitian inner product $\langle\cdot,\cdot\rangle$ on   $d_{21},$  $(d_{21},\langle\cdot,\cdot\rangle)$  cannot be a critical point of $F_3$.
First,  note that the multiplication relation of  $d_{21}$   is given as follows:
\begin{align*}
e_1e_1=e_3,\quad e_1e_2=e_3, \quad e_2e_1=-e_3.
\end{align*}
Denote by $\langle\cdot,\cdot\rangle_0$  the Hermitian inner product  on $d_{21}$ such that $\{e_1, e_2, e_3\}$ is orthonormal.
With respect to this  basis $\{e_1, e_2, e_3\}$,  the automorphism group of $d_{21}$  is given by
\begin{align}\label{auto}
\textnormal{Aut}(d_{21})=\left(\begin{array}{ccc}
a&0&0\\
b&a&0\\
c&d&a^2
\end{array}\right)\subset\textnormal{GL}(3,\mathbb{C}),
\end{align}
where $0\ne a\in\mathbb{C}$, and $b,c,d\in\mathbb{C}$ are arbitrary.

\begin{Lemma}\label{repre}
For any Hermitian inner product $\langle\cdot,\cdot\rangle$ on $d_{21}$, there exist $k>0$ and $\phi\in\textnormal{Aut}(d_{21})$ such that $\{a\phi e_1,\phi e_2,\phi e_3\}$ is  orthonormal  with respective to  $k\langle\cdot,\cdot\rangle$, where $a>0$
\end{Lemma}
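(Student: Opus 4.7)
The plan is to carry out a Gram--Schmidt-type orthogonalization inside the automorphism group $\textnormal{Aut}(d_{21})$ given in \eqref{auto}, followed by a rescaling of the inner product and a separate rescaling of the first basis vector. Write the Gram matrix $G=(g_{ij})=(\langle e_i,e_j\rangle)$ of $\langle\cdot,\cdot\rangle$ in the basis $\{e_1,e_2,e_3\}$ and parameterize $\phi\in\textnormal{Aut}(d_{21})$ by $(\alpha,b,c,d)$ with $\alpha\ne 0$, so that
\begin{align*}
\phi e_1=\alpha e_1+be_2+ce_3,\quad \phi e_2=\alpha e_2+de_3,\quad \phi e_3=\alpha^2 e_3.
\end{align*}
The conclusion of the lemma is equivalent to producing $\phi$ such that (i) $\phi e_1,\phi e_2,\phi e_3$ are mutually orthogonal in $\langle\cdot,\cdot\rangle$ and (ii) $\langle\phi e_2,\phi e_2\rangle=\langle\phi e_3,\phi e_3\rangle$; once (i) and (ii) hold, one simply sets $k=1/\langle\phi e_2,\phi e_2\rangle>0$ and $a=1/\sqrt{k\langle\phi e_1,\phi e_1\rangle}>0$.

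The key observation is that the block-triangular shape of $\phi$ decouples the orthogonality conditions. The equation $\langle\phi e_2,\phi e_3\rangle=0$ is linear in $d$ with nonzero coefficient $\overline{\alpha^2}g_{33}$, and uniquely determines $d=-\alpha g_{23}/g_{33}$. Substituting this, the remaining two conditions $\langle\phi e_1,\phi e_2\rangle=0$ and $\langle\phi e_1,\phi e_3\rangle=0$ form a $2\times 2$ linear system in $(b,c)$ whose coefficient matrix has determinant $g_{22}g_{33}-|g_{23}|^2$; this is positive because the $(2,3)$-principal minor of $G$ is positive definite. Hence for every $\alpha\ne 0$ we get unique $(b,c,d)$ making $\{\phi e_1,\phi e_2,\phi e_3\}$ orthogonal.

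Condition (ii) then reduces to a scalar equation in $|\alpha|$. A direct computation yields $\langle\phi e_3,\phi e_3\rangle=|\alpha|^4 g_{33}$ and $\langle\phi e_2,\phi e_2\rangle=|\alpha|^2(g_{22}g_{33}-|g_{23}|^2)/g_{33}$, so equality amounts to $|\alpha|^2=(g_{22}g_{33}-|g_{23}|^2)/g_{33}^2>0$, and we may take $\alpha$ to be the positive real square root of this expression. Defining $k$ and $a$ as above then finishes the proof.

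The only conceptual obstacle is that $\textnormal{Aut}(d_{21})$ is genuinely smaller than $\textnormal{GL}(3,\mathbb{C})$: its torus is one-dimensional and ties the scaling of $\phi e_3$ to the square of the scaling of $\phi e_1,\phi e_2$. Consequently automorphisms alone can equalize at most two of the three vector norms, which is precisely the reason the extra positive scalar $a$ is needed in the lemma in order to treat $\phi e_1$ independently. Beyond this observation, every step reduces to a triangular linear problem whose solvability is immediate from positive-definiteness of principal minors of $G$.
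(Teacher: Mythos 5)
Your proof is correct, and it takes a genuinely different route from the paper. The paper identifies the space of Hermitian inner products on $d_{21}$ with the homogeneous space $\textnormal{GL}(3,\mathbb{C})/\textnormal{U}(3)$ and deduces the lemma from the double-coset decomposition $\bigcup_{g\in\mathfrak{U}}\mathbb{C}^{\times}\textnormal{Aut}(d_{21})\cdot g\cdot\textnormal{U}(3)=\textnormal{GL}(3,\mathbb{C})$ with $\mathfrak{U}=\{\textnormal{diag}(a,1,1):a>0\}$, a decomposition it asserts without verification, following the moduli-space framework of Kubo--Onda--Taketomi--Tamaru. Your constrained Gram--Schmidt inside $\textnormal{Aut}(d_{21})$ is in effect a direct, self-contained verification of that same decomposition: the triangular shape of the automorphisms lets you solve for $d$, then $(b,c)$, by linear systems whose solvability follows from positive definiteness of the principal minors of the Gram matrix, and the one-parameter torus $\textnormal{diag}(\alpha,\alpha,\alpha^2)$ lets you equalize exactly two of the three norms, which is precisely why the residual factor $\textnormal{diag}(a,1,1)$ (your scalar $a$) is unavoidable. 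The computations check out: $d=-\alpha g_{23}/g_{33}$, the $(b,c)$-system has determinant $\pm\bar{\alpha}(g_{22}g_{33}-|g_{23}|^2)\neq 0$, and $|\alpha|^2=(g_{22}g_{33}-|g_{23}|^2)/g_{33}^2$ equalizes $\|\phi e_2\|$ and $\|\phi e_3\|$. What the paper's approach buys is brevity and a template that generalizes to other algebras once the coset decomposition is known; what yours buys is an elementary, fully explicit argument that fills in the step the paper leaves implicit. One cosmetic remark: the coefficient matrix of your $(b,c)$-system has determinant equal to $g_{22}g_{33}-|g_{23}|^2$ only up to a nonzero factor $-\bar{\alpha}$, but this does not affect solvability.
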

\begin{proof}
It suffices to prove that
$$\mathfrak{U}=\{\textnormal{diag}(a,1,1):a>0\}\subset\textnormal{GL}(3,\mathbb{C})$$
is a set of representatives for the action $\mathbb{C}^{\times}\textnormal{Aut}(d_{21})$ on  $\mathfrak{M}$, i.e., the space of all Hermitian inner products on $d_{21}$, which can be  identified with the homogeneous space   $\textnormal{GL}(3,\mathbb{C})/\textnormal{U}(3)$ at the base point $\langle\cdot,\cdot\rangle_0\in \mathfrak{M}$ (see \cite{KOTT}). Indeed, since
$$\bigcup_{g\in \mathfrak{U}}\mathbb{C}^{\times}\textnormal{Aut}({d_{21}})\cdot g\cdot \textnormal{U}(3)= \textnormal{GL}(3,\mathbb{C}),$$
it follows that $\mathfrak{U}$ is  a set of representatives.  For any Hermitian inner product $\langle\cdot,\cdot\rangle$ on $d_{21}$, we know that there exists $g_0\in \mathfrak{U}$ such that
\begin{align*}
 \langle\cdot,\cdot\rangle\in (\mathbb{C}^{\times}\textnormal{Aut}({d_{21}})).(g_0.\langle\cdot,\cdot\rangle_0)
\end{align*}
Hence there exist $c\in\mathbb{C}^{\times}$, $\phi\in\textnormal{Aut}({d_{21}})$ such that
\begin{align*}
 \langle\cdot,\cdot\rangle= (c\phi).(g_0.\langle\cdot,\cdot\rangle_0)=(c\phi g_0).\langle\cdot,\cdot\rangle_0)
\end{align*}
Put $k=|c|^2$, then
\begin{align*}
 k\langle\cdot,\cdot\rangle= k\langle (c\phi g_0)^{-1}(\cdot),(c\phi g_0)^{-1}(\cdot)\rangle_0
 =kc^{-1}\bar{c}^{-1}\langle  (\phi g_0)^{-1}(\cdot), (\phi g_0)^{-1}(\cdot)\rangle_0
 =\langle  (\phi g_0)^{-1}(\cdot), (\phi g_0)^{-1}(\cdot)\rangle_0
\end{align*}
Since $g_0\in\mathfrak{U}$, then $g_0=\textnormal{diag}\{a,1,1\}$ for some $a>0$.  It follows that $\{a\phi e_1,\phi e_2,\phi e_3\}$ is  orthonormal  with respective to  $k\langle\cdot,\cdot\rangle$
\end{proof}

\begin{Proposition}
For any Hermitian inner product $\langle\cdot,\cdot\rangle$ on $d_{21}$, $(d_{21},\langle\cdot,\cdot\rangle)$ can  not be a critical point of $F_{3}: \mathcal{A}_{3} \rightarrow \mathbb{R}.$
\end{Proposition}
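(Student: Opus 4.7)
The plan is to reduce the problem to a one-parameter computation via Lemma~\ref{repre}, show that $\textnormal{M}_\mu$ is automatically diagonal in a suitable basis, and then observe that diagonal derivations of $d_{21}$ are too restrictive to absorb $\textnormal{M}_\mu$ modulo a scalar. First I would observe that both rescaling the Hermitian inner product by $k>0$ and applying an automorphism $\phi \in \textnormal{Aut}(d_{21})$ preserve the property of being a critical point of $F_3$: rescaling does not change $L_X^\mu$, $R_X^\mu$ or their adjoints, hence leaves $\textnormal{M}_\mu$ unchanged, and applying $\phi$ is a unitary change of coordinates between two isometric realizations of $d_{21}$ equipped with possibly different inner products, so the relation $\textnormal{M}_\mu = c_\mu I + D_\mu$ with $D_\mu \in \textnormal{Der}(\mu)$ is transported intact. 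Hence by Lemma~\ref{repre} we may assume the given inner product makes $\{f_1,f_2,f_3\}:=\{ae_1,e_2,e_3\}$ orthonormal for some $a>0$.

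Next I would rewrite the multiplication of $d_{21}$ in the basis $\{f_1,f_2,f_3\}$, obtaining $f_1f_1 = a^2 f_3$, $f_1f_2 = af_3$, $f_2f_1 = -af_3$, and all other products zero. Using formula~(\ref{Mformula}) together with this small list of non-trivial products, I would verify directly that the only non-zero matrix entries of $\textnormal{M}_\mu$ in the basis $\{f_1,f_2,f_3\}$ are the diagonal ones (the off-diagonal $(1,2)$ entry is the only non-obvious vanishing and the two contributing terms cancel because of the opposite signs coming from $f_1f_2 = af_3$ and $f_2f_1 = -af_3$), giving
\begin{equation*}
\textnormal{M}_\mu = \operatorname{diag}\bigl(-4a^2(a^2+1),\,-4a^2,\,2a^2(a^2+2)\bigr).
\end{equation*}

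Now I would classify the diagonal derivations of $d_{21}$ in this basis. Writing $D=\operatorname{diag}(\alpha_1,\alpha_2,\alpha_3)$ and imposing the Leibniz rule on the three non-trivial products forces $\alpha_3 = 2\alpha_1$ (from $f_1f_1$) and $\alpha_3 = \alpha_1+\alpha_2$ (from $f_1f_2$), hence $\alpha_1=\alpha_2$ and $\alpha_3=2\alpha_1$. Therefore the diagonal derivations form the one-parameter family $D_\alpha = \operatorname{diag}(\alpha,\alpha,2\alpha)$.

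Finally, if $[\mu]$ were a critical point, Theorem~\ref{MID} would give $\textnormal{M}_\mu = cI + D_\mu$ for some $c\in\mathbb{R}$ and $D_\mu \in \textnormal{Der}(\mu)$; since $\textnormal{M}_\mu$ and $cI$ are both diagonal in $\{f_1,f_2,f_3\}$, so is $D_\mu$, and thus $D_\mu = D_\alpha$ for some $\alpha$. Comparing the first two diagonal entries of $\textnormal{M}_\mu = cI + D_\alpha$ yields $-4a^2(a^2+1) = -4a^2$, i.e.\ $a^4 = 0$, contradicting $a>0$. The main point requiring care is the initial reduction, namely justifying that one may test the critical point condition in the single-parameter family produced by Lemma~\ref{repre}; the remaining computation is purely routine linear algebra once the multiplication table and the diagonality of $\textnormal{M}_\mu$ are in hand.
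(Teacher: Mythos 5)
Your proposal is correct and follows essentially the same route as the paper: both reduce via Lemma~\ref{repre} to the one-parameter family $x_1x_1=a^2x_3$, $x_1x_2=ax_3$, $x_2x_1=-ax_3$ and compute $\textnormal{M}_\mu$, whose diagonal entries (proportional, after subtracting $c_\mu I$, to the paper's $(3a^4+6a^2+8,\,5a^4+10a^2+8,\,2(3a^4+8a^2+8))$) cannot be matched by any derivation for $a>0$. The only difference is the final step: the paper invokes Theorem~\ref{Nik} together with the automorphism group (\ref{auto}) to force the critical type $(1<2;2,1)$, whereas you classify the diagonal derivations directly as $\operatorname{diag}(\alpha,\alpha,2\alpha)$ and compare the first two diagonal entries of $\textnormal{M}_\mu$ --- an equivalent and slightly more self-contained way to reach the same contradiction.
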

\begin{proof}
Assume that $\langle\cdot,\cdot\rangle$ is a Hermitian inner product on $d_{21}$ such that  $(d_{21},\langle\cdot,\cdot\rangle)$ is  a critical point of $F_{3}: \mathcal{A}_{3} \rightarrow \mathbb{R}.$ Then the critical type is necessarily of  $(1<2;2,1)$ by Theorem~\ref{Nik} and (\ref{auto}). Moreover, for the Hermitian inner product $\langle\cdot,\cdot\rangle$ on $d_{21}$,  by Lemma~\ref{repre} we know that there exist $k>0$ and $\phi\in\textnormal{Aut}(d_{21})$ such that $\{x_1=a\phi e_1,x_2=\phi e_2,x_3=\phi e_3\}$ is  orthonormal  with respective to  $k\langle\cdot,\cdot\rangle$, where $a>0.$ With respect to the basis $\{x_1,x_2,x_3\}$, the multiplication relation  of $d_{21}$ is given as follows
\begin{align*}
x_1x_1=a^2x_3,\quad x_1x_2=ax_3,\quad x_2x_1=-ax_3.
\end{align*}
By  (\ref{M}), Lemma~\ref{c} and a straightforward calculation, it follows that the critical type is of
$$(3a^4+6a^2+8,5a^4+10a^2+8,2(3a^4+8a^2+8))$$
which is never of  type $(1<2;2,1)$ for any $a>0.$ This is a contradiction by Theorem~\ref{Nik}, and  the proposition is therefore proved.
\end{proof}

\subsection{Comments}
By the  previous discussion,  we know that the critical types  of $F_{n}: \mathcal{A}_{n} \rightarrow \mathbb{R}$, $n=2,3$, are necessarily nonnegative.  So it is natural to ask the following question:
\textit{Let $[\mu]\in \mathcal{A}_{n}$ be a critical point of $F_{n}: \mathcal{A}_{n} \rightarrow \mathbb{R}$  with $\textnormal{M}_\mu=c_\mu I+D_\mu$ for some $c_{\mu} \in \mathbb{R}$ and $D_{\mu} \in \textnormal{Der}(\mu)$.  Are all the eigenvalues of   $D_\mu$  necessarily nonnegative?}

On the other hand,   it will be also interesting to construct  or classify  the critical points  $[\mu]$ of  $F_{n}: \mathcal{A}_{n} \rightarrow \mathbb{R}$ such that $D_\mu$ has negative eigenvalues if the  above question does not hold. We note that  $2$-step nilpotent Lie algebras are automatically  associative algebras,  so it follows from \cite[Example~1]{Nik2006} that there  exist associative algebras whose Nikolayevsky derivations do admit negative eigenvalues.

\section{Statements and Declarations}
The authors declare that  there is no conflict of interest.

%

\end{document}